\documentclass[11pt,a4paper]{article}

\usepackage[T1]{fontenc}
\usepackage[utf8]{inputenc}
\usepackage[margin=2.7cm]{geometry}
\usepackage{amsmath,amssymb,amsthm}
\usepackage{booktabs}
\usepackage{algorithm}
\usepackage{algpseudocode}
\usepackage{graphicx}
\usepackage[colorlinks=true,citecolor=blue,linkcolor=blue,urlcolor=blue]{hyperref}
\usepackage[authoryear,longnamesfirst]{natbib}

\newtheorem{theorem}{Theorem}[section]
\newtheorem{lemma}[theorem]{Lemma}
\newtheorem{definition}[theorem]{Definition}
\newtheorem{proposition}[theorem]{Proposition}
\newtheorem{corollary}[theorem]{Corollary}
\newtheorem{example}[theorem]{Example}
\newtheorem{problem}[theorem]{Problem}

\title{Fundamentals of the cocyclic development of Hadamard matrices over loops}

\author{
Ra\'ul M. Falc\'on\thanks{Corresponding author. Department of Applied Mathematics I, Universidad de Sevilla, Spain. \texttt{rafalgan@us.es}}
\and
Manuel Gonz\'alez-Regadera\thanks{Department of Applied Mathematics I, Universidad de Sevilla, Spain. \texttt{mgonzalez34@us.es}}
\and
Felix Gudiel\thanks{Department of Applied Mathematics I, Universidad de Sevilla, Spain. \texttt{gudiel@us.es}}
}

\date{}

\begin{document}

\maketitle

\begin{abstract}
This paper establishes a formal framework for the cocyclic development of Hadamard matrices over loops based on a novel cohomology theory with associativity obstructions. In this context, we formalize the notion of a loop-cocyclic Hadamard matrix, so that both the classical cocyclic Hadamard matrices over groups and the recently introduced pseudococyclic Hadamard matrices over loops are naturally embedded within this broader algebraic architecture. To validate the computational viability of this framework, we prove the existence of four new Hadamard equivalence classes of order 24 and eight new classes of order 28 that are strictly non-cocyclic over any finite group, arising uniquely as loop-cocyclic developments over Moufang and right Bol loops.
\end{abstract}

\noindent\textbf{Keywords:} Hadamard matrix, loop, cocyclic development.

\noindent\textbf{2020 MSC:} 05B15, 20N05

\section{Introduction and preliminaries}
\label{Section:Intro}

A {\em Hadamard matrix} $H$ of order $n$ is an $n\times n$ array with entries in the cyclic group $\langle\,-1\,\rangle$ of order $2$ generated by $-1$ such that $HH^T=nI_n$, where $I_n$ is the identity matrix of order $n$. This last condition implies that $n$ is $1,\,2$ or a multiple of $4$. A Hadamard matrix is {\em normalized} if all the elements in its first row and first column are $1$. The {\em Hadamard Conjecture} indicates that there is a Hadamard matrix of order $4t$ for every positive integer $t$.

There are different constructions of Hadamard matrices (see \citealp{Hor07}). Here we highlight the {\em cocyclic development} of Hadamard matrices over finite groups \citep{HdL95}. In what follows, we recall some preliminaries on this development. Let $G$ be a finite group over which an abelian group $A$ is a trivial $G$-module, and let $k$ be a positive integer. (In the context of Hadamard matrices, $A$ is usually the cyclic group $\langle\,-1\,\rangle$ or the abelian group $(\mathbb{F}_2,+)$. From now on, we consider $A=\langle\,-1\,\rangle$.) A function $f:G^k\to A$ is a {\em $k$-cochain}. The set $\mathcal{C}^k(G,A)$ of $k$-cochains is an abelian group under the pointwise product. The {\em coboundary operator}
$\delta$ maps each $f\in \mathcal{C}^k\left(G,A\right)$ to a {\em $(k+1)$-coboundary} $\delta_f:=\delta(f)\in\mathcal{C}^{k+1}\left(G,A\right)$ so that
\begin{equation}\label{eq:coboundary}
\delta_f(a_1,\ldots,a_{k+1}):=f(a_1,\ldots,a_k)f(a_2,\ldots,a_{k+1})\prod_{i=1}^kf(a_1,\ldots,a_ia_{i+1},\ldots,a_{k+1})
\end{equation}
for all $a_1,\ldots,a_{k+1}\in G$. If $\delta_f={\bf 1}$, then $f$ is a {\em $k$-cocycle}. The set $\mathcal{B}^k(G,A)$ of $k$-coboundaries and the set $\mathcal{Z}^k(G,A)$ of $k$-cocycles are also abelian groups under the pointwise product.

This paper focuses on $2$-cocycles. From \eqref{eq:coboundary}, $f\in \mathcal{Z}^2(G,A)$ if and only if
\begin{equation}\label{HdLCoc}
f(a,b)f(ab,c)=f(a,bc)f(b,c)
\end{equation}
for all $a,b,c\in G$. The $2$-cocycle $f$ is uniquely determined by the {\em cocyclic matrix} $M_f:=\left(f(a,b)\right)_{a,b\in G}$. The quotient group
\[\mathcal{H}^2(G,A):=\mathcal{Z}^2(G,A)/\mathcal{B}^2(G,A)\]
is the {\em second cohomology group of $G$ over $A$}. Two cocycles $\psi_1,\psi_2\in\mathcal{Z}^2\left(G,A\right)$ are termed {\em cohomologous} if $\psi_2=\delta_f\psi_1$ for some $f\in \mathcal{C}^1\left(G,A\right)$. Every cohomology class $[\psi]\in \mathcal{H}^2(G,A)$, with representative $\psi$, endows the set $E_\psi:=A\times G$ of a group structure by means of the product
\begin{equation}\label{eq:extension}
(i,a)\cdot (j,b)=(ij\psi(a,b),ab)
\end{equation}
for all $i,j\in A$ and $a,b\in G$. It describes a short exact sequence $1\rightarrow A  \overset{\iota}{\hookrightarrow} E_\psi \xrightarrow{\pi} G\rightarrow 1$. The pair $(E_\psi,\pi)$ is termed a {\em central extension of $G$ by $A$}.

A Hadamard matrix $H$ is {\em cocyclic over $G$} if there is a pair $(f,\psi)\in \mathcal{C}^1(G,A)\times \mathcal{Z}^2(G,A)$ such that $H$ coincides with the {\em cocyclic matrix} $M_{\psi,f}:=\left(M_{f,\psi}[a,b]\right)_{a,b\in G}$ that is defined so that
\begin{equation}\label{eq:cocyclicHM}
M_{f,\psi}[a,b]:=f(ab)\psi(a,b)
\end{equation}
for all $a,b\in G$. This matrix is {\em pure cocyclic} if $f={\bf 1}$, in which case it is simply denoted by $M_\psi$. \'O Cath\'ain and R\"oder \citep{OC11} enumerated all cocyclic Hadamard matrices of order less than 40. Thus, all Hadamard matrices of order up to 20 are {\em equivalent} to a cocyclic one. That is, they are equal up to permutations or negations of rows and columns. For higher orders, 16 of the 60 Hadamard equivalence classes of order 24, and 6 of the 487 classes of order 28 are cocyclic. In addition, there are 100 cocyclic Hadamard equivalence classes of order 32, and 35 ones of order 36.

The {\em cocyclic Hadamard test} (see \citealp{HdL95}) ensures that $M_\psi$ is Hadamard if and only if the summation of all entries in each row is zero, except for the constant row. Thus, determining whether a cocyclic matrix is Hadamard is computationally much faster than the general case. The {\em cocyclic Hadamard Conjecture} indicates indeed that there is a cocyclic Hadamard matrix of order $4t$ for every positive integer $t$. Many families of Hadamard matrices are cocyclic over certain groups, such as Sylvester \citep{Sylvester1867}, Paley \citep{Paley1933}, Williamson \citep{Williamson1944}, or Ito's type Q Hadamard matrices \citep{Ito1997}. Much more recently, Baliga and Horadam \citep{Baliga1995} dealt with the existence of cocyclic Hadamard matrices over the group $\mathbb{Z}_t\times\mathbb{Z}_2^2$, with $t$ odd. For $t\leq 11$, this existence was constructively proved by Flannery \citep{Flannery1997} over the dihedral group. In the same paper, he also established some sufficient conditions for the existence of cocyclic Hadamard matrices over abelian, dicyclic, and dihedral groups of certain orders. Alternative constructions were obtained by \'Alvarez et al. \citep{Alvarez2016}. On the contrary, \'O Cath\'ain \citep{OC08} proved that some families of Hadamard matrices are not cocyclic. Thus, he proved the existence of two Goethals-Seidel Hadamard matrices \citep{GS67} of order 28, and also 15 of order 36, that are not cocyclic.

A natural generalization of this cocyclic development from groups to loops was described in \citep{Alvarez2019,Alvarez2020,Falcon2021}. Recall here that a {\em loop} is a finite set $L$ endowed with a binary operation with an identity element such that both equations $ax=b$ and $ya=b$ have unique solutions $x,y\in L$ for all $a,b\in L$. This implies the existence of both a left-division $\backslash$ and a right-division $/$. More precisely, $x=a\backslash b$ and $y=b/a$. A loop is a group whenever its binary operation is associative. The multiplication table of every loop is a {\em Latin square}. That is, each symbol appears exactly once per row, and exactly once per column. A family of loops of particular relevance in our study is that of Moufang loops. A {\em Moufang loop} \citep{Moufang1935} is a loop $L$ that satisfies $a(b(ac))=((ab)a)c$ for all $a,b,c\in L$. Moufang loops preserve some relevant results in group theory such as the Lagrange theorem. Moreover, they are diassociative. That is, any two of their elements generate a group. Hence, Moufang loops are very good candidates for a cocyclic development of Hadamard matrices.

In the literature, cohomology of loops has been dealt with by means of either iterations of the {\em associator} $\mathcal{A}(a,b,c):=((ab)c)/(a(bc))$ \citep{Eilenberg1947} or module extensions \citep{Johnson1990}. Despite this, the cocyclic development of Hadamard matrices over loops was introduced by \'Alvarez et al. \citep{Alvarez2019} without delving into any of these two loop cohomologies. Cocycles, cochains and coboundaries of a loop $L$ were introduced exactly as the homologous concepts for groups, once the underlying group is replaced by $L$. Under this assumption, the respective sets $\mathcal{Z}^k(L,A)$, $\mathcal{C}^k(L,A)$ and $\mathcal{B}^k(L,A)$ of $k$-cocycles, $k$-cochains and $k$-coboundaries of $L$ are again abelian groups under the pointwise product. The cocyclic Hadamard test for groups is also valid for non-associative loops (see \citealp[Theorem~29]{Alvarez2019}).

There are mere a pair of results concerning the existence of cocyclic Hadamard matrices over non-associative loops. Thus, the first order for which one such a matrix exists is eight (see \citealp[Theorem~30]{Alvarez2019}). In addition, the direct product of two loops (at least one of them being non-associative) over which a cocyclic Hadamard matrix exists is also a non-associative loop with the same property (see \citealp[Theorem~31]{Alvarez2019}).

The main contribution of this paper on this topic is a positive answer to the following open problem.

\begin{problem}[Based on Problem 43 in \citealp{Alvarez2019}] \label{problem3a} Does there exist a Hadamard equivalence class that is not cocyclic over a group, but it is cocyclic over a non-associative loop?
\end{problem}

Our affirmative answer to Problem \ref{problem3a} illustrates that the cocyclic Hadamard Conjecture is highly constrained by the associative law, so the cocyclic development over loops may be the key to delve into this conjecture. Our approach is closer to the original cohomology of loops described by Eilenberg and MacLane, because we introduce a non-associative perturbation in the cocyclic development of Hadamard matrices over groups. To understand much better the necessity of this perturbation, a relevant aspect here concerns the inconveniences of preserving the definition \eqref{eq:coboundary} on coboundaries from groups to non-associative loops. Unlike the cocyclic development over groups, preserving that definition does not ensure that every coboundary is a cocycle. More precisely, if $(f,\psi)\in \mathcal{C}^1(L,A)\times\mathcal{Z}^2(L,A)$, then
\begin{equation}\label{eq_pseudo}
\delta_f\psi\in \mathcal{Z}^2(L,A)\Leftrightarrow f((ab)c)=f(a(bc)) \text{ for all } a,b,c\in L.
\end{equation}
This condition is too restrictive and would imply $f=\mathbf{1}$ in general. As an alternative, the notion of {\em pseudocoboundary} was introduced in \citep{Alvarez2020} (see also \citealp{Falcon2021}) as an elementary coboundary $\delta_{\partial_h}\in \mathcal{B}^2(L,A)$, with $h\in L$, where $\partial_h\in \mathcal{C}^1(L,A)$ is defined so that $\partial_h(s)=1$ if $h\neq s$, $\partial_h(h)=-1$, and
\begin{equation}\label{eq:pseudocoboundary}
\partial_h((ab)c)\neq \partial_h(a(bc)) \text{ for some } a,b,c\in L.
\end{equation}
Then, the notion of a {\em pseudococycle} was also introduced in \citep{Alvarez2020} as a product
\begin{equation}\label{eq:pseudococycle}\left(\prod_{s\in S}\delta_{\partial_s}\right)\psi\in \mathcal{Z}^2(L,A),
\end{equation}
with $S\subseteq L$ and $\psi\in \mathcal{Z}^2(L,A)$. A Hadamard matrix is then termed {\em pseudococyclic} if it is equivalent to the matrix defined by any such pseudococycle. Unlike cocyclic Hadamard matrices over groups and loops, the Hadamard cocyclic test is not valid for pseudococylic Hadamard matrices (see \citealp[Example~4]{Alvarez2020}). Every Hadamard matrix of Goethals-Seidel type (some of which are non-cocyclic developed over any group \citealp{OC08}) is pseudococyclic (see \citealp[Theorem~1]{Alvarez2020}). This fact justifies the interest of dealing with the cocyclic development of Hadamard matrices over non-associative loops.

The paper is organized as follows. In Section \ref{sec:preliminaries}, we make use of a corrector factor that arises from \eqref{eq_pseudo} to introduce loop-cocycles as a natural generalization of cocycles of loops. We prove that the set of coboundaries is a normal subgroup of the group of loop-cocycles. Based on this normal subgroup, we define a quotient group, and hence an equivalence relations among loop-cocyles. This enables us to introduce in Section \ref{sec:hadamard} the notion of loop-cocyclic Hadamard matrices over loops as a natural generalizations of cocyclic Hadamard matrices over groups. In particular, every pseudococyclic Hadamard matrix is a loop-cocyclic Hadamard matrix.

\section{Loop-cocycles}\label{sec:preliminaries}

In this section, we describe the cohomology with associativity obstruction that serves as a formal framework for the cocyclic development of Hadamard matrices over loops. Throughout the section, we consider a loop $L$ of order $n$ with identity element $e$. In addition, we represent the cocycle values in the additive field $\mathbb{Z}_2:=(\mathbb{F}_2,+)$ rather than in the multiplicative set $\left\langle\,-1\,\right\rangle$. With this choice, we shift from the combinatorial complexity of sign multiplication to the structural clarity of linear subspaces. This additive representation is essential in our illustrative examples for identifying bases of the corresponding cocycle vector space and managing the associativity obstructions through vector addition.

The following definition introduces the notion of a loop-cocycle as a natural generalization of the cocycles described in \citep{Alvarez2019}.

\begin{definition}\label{def_f_cocycle} A {\em $k$-loop-cocycle} of $L$ over $\mathbb{Z}_2$ is a $k$-cochain $\psi\in\mathcal{C}^k(L,\mathbb{Z}_2)$ such that $\delta_\psi\in\mathrm{Im}(\delta^2)$. We denote by $\mathcal{Z}_{\mathcal{L}}^k(L,\mathbb{Z}_2)$ the abelian group under the pointwise product of $k$-loop-cocycles of $L$ over $\mathbb{Z}_2$.
\end{definition}

The classical notion of a $k$-cocycle over a group arises naturally when the loop $L$ is associative because in that case $\delta_\psi=\delta^2(0)=0$. However, in the non-associative case, the coboundary operator is now measuring not just exactness but also the lack of associativity as an obstruction that prevents the cochain from closing into a cocycle. In this context, $\delta^2$ acts as a curvature operator just as in differential geometry, where the second derivative does not vanish whenever the space is not flat.

In general, $\mathcal{B}^k(L,\mathbb{Z}_2)\not \subseteq \mathcal{Z}^k(L,\mathbb{Z}_2)$ for non-associative loops (see \citealp{Alvarez2020}). The next lemma shows the right framework for loop-cocycles. It follow readily from Definition \ref{def_f_cocycle}.

\begin{lemma}\label{lemma_subgroup} It is verified that
\[\mathcal{B}^k(L,\mathbb{Z}_2)\trianglelefteq \mathcal{Z}_{\mathcal{L}}^k(L,\mathbb{Z}_2) \hspace{1cm}\text{and}\hspace{1cm} \mathcal{Z}^k(L,\mathbb{Z}_2)\trianglelefteq \mathcal{Z}_{\mathcal{L}}^k(L,\mathbb{Z}_2).\]
\end{lemma}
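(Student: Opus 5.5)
The plan is to reduce everything to linear algebra over $\mathbb{Z}_2$. Normality is then automatic, so only the subgroup claims need work. All sets involved, $\mathcal{C}^k(L,\mathbb{Z}_2)$, $\mathcal{B}^k(L,\mathbb{Z}_2)$, $\mathcal{Z}^k(L,\mathbb{Z}_2)$ and $\mathcal{Z}_{\mathcal{L}}^k(L,\mathbb{Z}_2)$, sit inside the abelian group $\mathcal{C}^k(L,\mathbb{Z}_2)$ under pointwise addition. Any subgroup of an abelian group is therefore normal in any intermediate subgroup. So it suffices to check three things: $\mathcal{Z}_{\mathcal{L}}^k(L,\mathbb{Z}_2)$ is a subgroup of $\mathcal{C}^k(L,\mathbb{Z}_2)$, and both $\mathcal{B}^k(L,\mathbb{Z}_2)$ and $\mathcal{Z}^k(L,\mathbb{Z}_2)$ are subgroups contained in it.

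\emph{Step 1: $\mathcal{Z}_{\mathcal{L}}^k$ is a subgroup.} I will first note that each coboundary operator $\delta^j:\mathcal{C}^j(L,\mathbb{Z}_2)\to\mathcal{C}^{j+1}(L,\mathbb{Z}_2)$ is $\mathbb{Z}_2$-linear. This is immediate from \eqref{eq:coboundary}: every factor is an evaluation of $f$, so $\delta_{f+g}=\delta_f+\delta_g$, and this holds even when $L$ is non-associative. Here I read $\mathrm{Im}(\delta^2)$ in Definition~\ref{def_f_cocycle} as the image of the operator applied at level $k-1$, i.e.\ the composite $\delta\circ\delta$ on $\mathcal{C}^{k-1}$. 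Under that reading, $\mathrm{Im}(\delta^2)$ is a subspace of $\mathcal{C}^{k+1}(L,\mathbb{Z}_2)$ and $\mathcal{Z}_{\mathcal{L}}^k=(\delta^k)^{-1}(\mathrm{Im}(\delta^2))$ is the preimage of a subspace under a linear map. Hence it is a subgroup, and indeed a $\mathbb{Z}_2$-subspace.

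\emph{Step 2: the inclusions.} If $\psi\in\mathcal{Z}^k$, then $\delta_\psi=0=\delta^2(0)\in\mathrm{Im}(\delta^2)$, so $\psi\in\mathcal{Z}_{\mathcal{L}}^k$; and $\mathcal{Z}^k=\ker\delta^k$ is a subgroup. If $\psi\in\mathcal{B}^k$, say $\psi=\delta_f$ with $f\in\mathcal{C}^{k-1}$, then $\delta_\psi=\delta(\delta(f))=\delta^2(f)\in\mathrm{Im}(\delta^2)$ tautologically, so $\psi\in\mathcal{Z}_{\mathcal{L}}^k$. Also $\mathcal{B}^k=\mathrm{Im}\,\delta^{k-1}$ is a subgroup by linearity. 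Combining Steps 1 and 2 with commutativity gives both normality statements.

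The only real obstacle is interpretive rather than computational. One must fix the meaning of $\mathrm{Im}(\delta^2)$ as the image of the composite $\delta^{k}\circ\delta^{k-1}$ on $(k-1)$-cochains, so that the key point for coboundaries ($\delta(\delta f)\in\mathrm{Im}(\delta^2)$) is literally true. One must also observe that linearity of $\delta$ does not use associativity. With that settled, the rest is the routine verifications above.
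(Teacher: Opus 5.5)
Your proof is correct and follows the same route as the paper, which only remarks that the lemma follows readily from Definition~\ref{def_f_cocycle}. You supply the routine details it leaves implicit: linearity of $\delta$ without associativity, $\delta_\psi=0=\delta^2(0)$ for $\psi\in\mathcal{Z}^k(L,\mathbb{Z}_2)$, $\delta(\delta_f)=\delta^2(f)$ for coboundaries, and normality coming for free because everything sits inside the abelian group $\mathcal{C}^k(L,\mathbb{Z}_2)$.
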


As an immediate consequence, the pseudococyclic development described in the introduction is naturally embedded in the loop-cocyclic development.

\begin{proposition}\label{proposition_pseudocoboundary} Every pseudocoboundary and every pseudococycle of $L$ is a loop-cocycle.
\end{proposition}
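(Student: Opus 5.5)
The plan is to read Definition~\ref{def_f_cocycle} for $k=2$: a $2$-cochain $\psi$ is a loop-cocycle exactly when its coboundary $\delta_\psi$ equals $\delta^2(g)$ for some $2$-cochain $g\in\mathcal{C}^2(L,\mathbb{Z}_2)$. Both claims then reduce to exhibiting, for each object in question, a $2$-cochain whose coboundary matches. Since $\mathcal{Z}^2_{\mathcal{L}}(L,\mathbb{Z}_2)$ is a group under the pointwise operation, and $\delta^3$ and $\delta^2$ are homomorphisms (pointwise sums map to sums), the set $\mathrm{Im}(\delta^2)$ is a subgroup of $\mathcal{C}^3(L,\mathbb{Z}_2)$. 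Hence it suffices to treat the two building blocks, elementary coboundaries and cocycles, separately, and then add.

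\textbf{Step 1: every coboundary is a loop-cocycle.} Take $f\in\mathcal{C}^1(L,\mathbb{Z}_2)$ and $\varphi=\delta_f$. Then $\delta_\varphi=\delta^3(\delta^2(f))$, and I want it to equal $\delta^2(g)$ for some $2$-cochain $g$. This inclusion $\mathcal{B}^2\subseteq\mathcal{Z}^2_{\mathcal{L}}$ is precisely the content of Lemma~\ref{lemma_subgroup}, so I would invoke that lemma rather than recompute. As a sanity check, in additive notation one has
\[
\delta_{\delta_f}(a,b,c)=f((ab)c)+f(a(bc)),
\]
because all other terms cancel in pairs over $\mathbb{Z}_2$. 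This is the associativity obstruction of \eqref{eq_pseudo}. The lemma asserts that this obstruction lies in $\mathrm{Im}(\delta^2)$. In particular, each pseudocoboundary $\delta_{\partial_h}$ is a coboundary. Condition \eqref{eq:pseudocoboundary} only guarantees that it is not a genuine cocycle, which is irrelevant here. So $\delta_{\partial_h}\in\mathcal{Z}^2_{\mathcal{L}}(L,\mathbb{Z}_2)$.

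\textbf{Step 2: cocycles are loop-cocycles.} For $\psi\in\mathcal{Z}^2(L,\mathbb{Z}_2)$ we have $\delta_\psi=0=\delta^2(0)\in\mathrm{Im}(\delta^2)$. This is the second inclusion in Lemma~\ref{lemma_subgroup}.

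\textbf{Step 3: pseudococycles.} A pseudococycle has the form $\bigl(\prod_{s\in S}\delta_{\partial_s}\bigr)\psi$ with $\psi\in\mathcal{Z}^2(L,\mathbb{Z}_2)$, as in \eqref{eq:pseudococycle}. This is a product in the group $\mathcal{Z}^2_{\mathcal{L}}(L,\mathbb{Z}_2)$ of elements from Steps~1 and~2, so it lies in $\mathcal{Z}^2_{\mathcal{L}}(L,\mathbb{Z}_2)$ by closure. The extra requirement in \eqref{eq:pseudococycle} that the product also be a genuine cocycle would give membership directly via Step~2, so either route works.

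The only potential obstacle is Step~1, namely justifying that the associator term $f((ab)c)+f(a(bc))$ lies in $\mathrm{Im}(\delta^2)$. Since Lemma~\ref{lemma_subgroup} is available, I treat this as given. If one wanted it from scratch, the natural thing to try is reading $\mathrm{Im}(\delta^2)$ as the image of the coboundary operator applied in degree $2$. One would then aim to write $f\circ\mu_{\mathrm{left}}+f\circ\mu_{\mathrm{right}}$ as $\delta_g$ for a suitable $g$ built from $f$ and the multiplication, which I expect is where the real content of that lemma lies.
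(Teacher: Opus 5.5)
Your argument is correct and matches the paper's: the paper states the proposition as an immediate consequence of Lemma~\ref{lemma_subgroup}, since a pseudocoboundary is an elementary coboundary in $\mathcal{B}^2(L,\mathbb{Z}_2)$, and a pseudococycle is a product of such coboundaries with a cocycle, so it lies in the group $\mathcal{Z}^2_{\mathcal{L}}(L,\mathbb{Z}_2)$. You have misread Definition~\ref{def_f_cocycle}, though: there $\delta^2$ is the double coboundary $\delta\circ\delta\colon\mathcal{C}^{1}(L,\mathbb{Z}_2)\to\mathcal{C}^{3}(L,\mathbb{Z}_2)$ (compare Lemma~\ref{lemma_quotient}), not the coboundary of a $2$-cochain $g$ (under that reading every $\psi$ would qualify by taking $g=\psi$), so your sanity-check identity $\delta_{\delta_f}(a,b,c)=f((ab)c)+f(a(bc))$ is exactly $\delta^2(f)$ and Step~1 holds by definition, with no hidden obstacle.
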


The following example illustrates the relevance of this pseudococyclic development. It gives an affirmative answer to Problem \ref{problem3a}.

\begin{example}\label{example_GS28} A {\em Goethals-Seidel array} \citep{GS67} of order $4t$ is a $4t \times 4t$-block matrix
\[\left( \begin{array}{rrrr} A& BR & CR& DR\\ BR &-A& RD& -RC\\ CR&-RD& -A& RB\\ DR& RC& -RB& -A \end{array} \right)\]
where $A$, $B$, $C$ and $D$ are $t\times t$-circulant matrices, and $R$ is the back diagonal matrix of order $t$. This array is Hadamard if
\[AA^T+BB^T+CC^T+DD^T=4t\cdot I_{t}\]
where $X^T$ denotes the transpose of $X\in\{A,B,C,D\}$ and $I_t$ denotes the identity matrix of order $t$.

Based on the construction described in \citep{Alvarez2020}, we have computed in the {\sc GAP} system {\em (Groups, Algorithms, Programming)} \citep{GAP} all the Hadamard equivalence classes of order 28 that are pseudococyclic over the Moufang loop $\mathrm{GS}_{28}$ therein introduced, none of which are cocyclic over groups. (Note that there is a unique Moufang loop of order $28$ up to isomorphism \citealp{Chein1974}.) We have obtained the four Hadamard equivalence classes described in Table \ref{tab:28}. Each class is labeled according to the Sloane's library of Hadamard matrices \citep{Sloane}. For each class, we indicate the size of its automorphism group, and the signs of the first rows of the circulant matrices $A$, $B$, $C$ and $D$ that describe the Goethals-Seidel Hadamard array under consideration. Note here that O'Cath\'ain already realized in his Ph.D. Thesis \citep{OC08} the non-cocyclic development of both Goethals-Seidel arrays \texttt{had.28.2} and \texttt{had.28.375}.

These four Hadamard equivalence classes over the Moufang loop $\mathrm{GS}_{28}$ are in addition to the known six cocyclic Hadamard classes over groups, making a new total (for the moment) of 10 cocyclic classes out of the 487 existing ones.
\end{example}

\begin{table}[ht]
\centering
\caption{Goethals-Seidel Hadamard equivalence classes that are pseudococyclic over the Moufang loop $\mathrm{GS}_{28}$ described in \citep{Alvarez2020}.}
\label{tab:28}
\begin{tabular}{@{}lcllll}
\toprule
\textbf{ID \citep{Sloane}} &  \textbf{$|\mathrm{Aut}(L)|$} & $A$ & $B$ & $C$ & $D$ \\ \midrule
\texttt{had.28.2} & 24 & [-\,-\,-\,-\,-\,+\,+] & [-\,-\,-\,-\,+\,-\,+] & [-\,-\,-\,+\,-\,-\,+] & [-\,-\,-\,+\,-\,+\,+]\\
\texttt{had.28.18} & 18 & [-\,-\,-\,-\,-\,-\,+] & [-\,-\,-\,+\,-\,+\,+] & [-\,-\,-\,+\,-\,+\,+]& [-\,-\,-\,+\,-\,+\,+]\\
\texttt{had.28.375} & 48 & [-\,-\,-\,-\,-\,-\,+] & [-\,-\,-\,-\,+\,+\,+] & [-\,-\,+\,-\,-\,+\,+]& [-\,-\,+\,-\,+\,-\,+]\\
\texttt{had.28.410} & 16 & [-\,-\,-\,-\,-\,+\,+] & [-\,-\,-\,-\,-\,+\,+] & [-\,-\,-\,+\,-\,-\,+] & [-\,-\,+\,-\,+\,-\,+]\\
\bottomrule
\end{tabular}
\end{table}

In what follows, we describe a cohomology with associativity obstructions that is based on the classical loop cohomology introduced by Eilenberg and MacLane in \citep{Eilenberg1947}. We will show how this cohomology is useful to deal with the enumeration of loop-cocycles. First, we define the {\em associative obstruction set}
\[\mathcal{O}(L):=\bigcup_{a,b,c\in L}\left\{(ab)c,\, a(bc)\colon\, \mathcal{A}(a,b,c)\neq e\right\}\]
where $\mathcal{A}(a,b,c)$ is the associator. Then, for each positive integer $k\geq 2$, we define the abelian subgroup $\mathcal{C}_0^{k-1}(L,\mathbb{Z}_2)\trianglelefteq \mathcal{C}^{k-1}(L,\mathbb{Z}_2)$ of cochains $f\in \mathcal{C}^{k-1}(L,\mathbb{Z}_2)$ such that $f(a_1,\ldots,a_{i-1},a,a_{i+1},\ldots,a_{k-1})=0$ for all $a_1,a_{i-1},a_{i+1},\ldots,a_{k-1}\in L$, with $i\in\{1,\ldots,k-1\}$, and $a\not\in\mathcal{O}(L)$. In particular,
\[\mathcal{C}_0^1(L,\mathbb{Z}_2):=\left\{f\in \mathcal{C}^1(L,\mathbb{Z}_2)\colon\, f(a)=0 \text{ for all } a\not\in\mathcal{O}(L)\right\}\trianglelefteq \mathcal{C}^1(L,\mathbb{Z}_2).\]
We also define the abelian subgroup
\[\mathcal{B}_0^k(L,\mathbb{Z}_2):=\left\{\delta_f\in \mathcal{B}^k(L,\mathbb{Z}_2)\colon\, f\in \mathcal{C}^{k-1}_0(L,\mathbb{Z}_2)\right\}\trianglelefteq \mathcal{B}^k(L,\mathbb{Z}_2)\]
and the quotient group
\[\mathcal{B}_\mathcal{L}^k(L,\mathbb{Z}_2):=\mathcal{B}_0^k(L,\mathbb{Z}_2)/\left(\mathcal{B}_0^k(L,\mathbb{Z}_2)\cap \mathcal{Z}^k(L,\mathbb{Z}_2)\right).\]
From now on, we denote by $[\delta_f]$ the equivalence class in $\mathcal{B}_\mathcal{L}^k(L,\mathbb{Z}_2)$ of a coboundary $\delta_f\in \mathcal{B}_0^k(L,\mathbb{Z}_2)$. In particular, if $L$ is a group, then
\[\mathcal{O}(L)=\emptyset,\hspace{1cm} \mathcal{C}_0^{k-1}(L,\mathbb{Z}_2)=\mathcal{B}_0^k(L,\mathbb{Z}_2)=\{0\}\hspace{1cm}\text{and}\hspace{1cm} \mathcal{B}_\mathcal{L}^k(L,\mathbb{Z}_2)=\{[0]\}.\]
The following lemma shows that $\mathcal{B}_\mathcal{L}^k(L,\mathbb{Z}_2)\cong \mathrm{Im}(\delta^2)$, the image of $\delta^2$.

\begin{lemma}\label{lemma_quotient}
The map
\[\begin{array}{cccc}
\Phi: & \mathcal{B}_{\mathcal{L}}^k(L,\mathbb{Z}_2)& \to & \mathrm{Im}(\delta^2)\\
& [\delta_f] &\mapsto & \delta^2(f)
\end{array}\]
is an isomorphism of abelian groups.
\end{lemma}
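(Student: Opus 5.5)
Following the paper's usage, we read $\delta^2(f)$ as $\delta(\delta_f)$ and $\mathrm{Im}(\delta^2)$ as the image of $\delta^2$ restricted to $\mathcal{C}^{k-1}_0(L,\mathbb{Z}_2)$. With that reading, the plan is to apply the first isomorphism theorem to the homomorphism
\[\widetilde{\Phi}:\mathcal{B}_0^k(L,\mathbb{Z}_2)\to \mathcal{C}^{k+1}(L,\mathbb{Z}_2),\qquad \delta_f\mapsto \delta(\delta_f)=\delta^2(f).\]
We then show that $\ker\widetilde{\Phi}=\mathcal{B}_0^k(L,\mathbb{Z}_2)\cap\mathcal{Z}^k(L,\mathbb{Z}_2)$ and identify the image with $\mathrm{Im}(\delta^2)$. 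The map $\Phi$ in the statement is exactly the map induced by $\widetilde{\Phi}$ on the quotient $\mathcal{B}_{\mathcal{L}}^k(L,\mathbb{Z}_2)$.

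\textbf{Step 1: $\widetilde{\Phi}$ is a well-defined homomorphism.} It is defined on the coboundary $\delta_f$ itself, not on the chosen $f$, so no independence check is needed at this level. The operator $\delta$ in \eqref{eq:coboundary} is a sum of evaluations of its argument, hence $\mathbb{Z}_2$-linear. Therefore $\delta(\delta_f+\delta_g)=\delta(\delta_f)+\delta(\delta_g)$. Associativity of $L$ is never used here, since linearity only concerns the cochain argument.

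\textbf{Step 2: the kernel.} By definition, $\delta(\delta_f)=0$ means that $\delta_f$ is a $k$-cocycle. Hence $\ker\widetilde{\Phi}=\mathcal{B}_0^k(L,\mathbb{Z}_2)\cap\mathcal{Z}^k(L,\mathbb{Z}_2)$. This gives two things at once. First, $\Phi([\delta_f])=\delta^2(f)$ is independent of the representative: if $[\delta_f]=[\delta_g]$, then $\delta_f-\delta_g$ is a cocycle, so $\delta^2(f)=\delta^2(g)$. Second, $\Phi$ is injective.

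\textbf{Step 3: surjectivity, the main obstacle.} On the restricted reading of $\mathrm{Im}(\delta^2)$, surjectivity is immediate. To make the lemma meaningful for the full image, we would also prove that $\delta^2(\mathcal{C}^{k-1}(L))=\delta^2(\mathcal{C}^{k-1}_0(L))$. The strategy is to show that $\delta^2(f)$ depends only on the values of $f$ on arguments lying in $\mathcal{O}(L)$.

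For $k=2$ this is a direct computation. Expanding $\delta(\delta_f)(a,b,c)$ with \eqref{eq:coboundary}, all terms of the form $f(x)$ with $x$ a single letter or a product of two letters cancel in pairs, exactly as in the group case. What remains is
\[\delta^2(f)(a,b,c)=f((ab)c)+f(a(bc)),\]
which is precisely the correction appearing in \eqref{eq_pseudo}. This expression vanishes whenever $\mathcal{A}(a,b,c)=e$. When $\mathcal{A}(a,b,c)\neq e$, both arguments $(ab)c$ and $a(bc)$ lie in $\mathcal{O}(L)$. So replacing $f$ by $f\cdot\mathbf{1}_{\mathcal{O}(L)}\in\mathcal{C}_0^1(L,\mathbb{Z}_2)$ leaves $\delta^2(f)$ unchanged.

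For general $k$, we expect the same telescoping argument to work. Every surviving term should compare $f$ evaluated at two bracketings $(a_ia_{i+1})a_{i+2}$ and $a_i(a_{i+1}a_{i+2})$ in one coordinate, so the truncation in that coordinate again preserves $\delta^2(f)$.

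The delicate part is this bookkeeping for $k>2$. If the cancellation turns out to be incomplete there, we would fall back on the restricted reading of $\mathrm{Im}(\delta^2)$, under which Step 3 is trivial.
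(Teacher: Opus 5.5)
For $k=2$ your argument is the paper's. The kernel $\mathcal{B}_0^k(L,\mathbb{Z}_2)\cap\mathcal{Z}^k(L,\mathbb{Z}_2)$ gives well-definedness and injectivity exactly as there, and surjectivity uses the same truncation $f_0=f\cdot\mathbf{1}_{\mathcal{O}(L)}$. Your identity $\delta^2(f)(a,b,c)=f((ab)c)+f(a(bc))$ actually improves on the paper, which asserts $\delta^2(f)=\delta^2(f_0)$ without justification.

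The genuine gap is the extrapolation to $k>2$ in Step 3. The cancellation there is complete; for $k=3$ one gets
\[
\delta^2(f)(a,b,c,d)=f((ab)c,d)+f(a(bc),d)+f(a,(bc)d)+f(a,b(cd)).
\]
However, truncating ``in that coordinate'' is not what membership in $\mathcal{C}_0^{2}(L,\mathbb{Z}_2)$ requires. Such cochains vanish as soon as \emph{any} argument lies outside $\mathcal{O}(L)$, so the pair $f((ab)c,d)+f(a(bc),d)$ is destroyed whenever $d\notin\mathcal{O}(L)$.

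No cleverer choice of $f_0$ repairs this. Suppose $e\notin\mathcal{O}(L)$. This holds in every non-associative IP loop, in particular every Moufang loop, since there both $(ab)c=e$ and $a(bc)=e$ are equivalent to $c=b^{-1}a^{-1}$. Choose $b,c,d$ with $x:=(bc)d\neq b(cd)$, and let $g\in\mathcal{C}^2(L,\mathbb{Z}_2)$ be the indicator of $(e,x)$. The first two terms cancel because $(eb)c=e(bc)$, so $\delta^2(g)(e,b,c,d)=g(e,x)+g(e,b(cd))=1$. On the other hand, every $h\in\mathcal{C}^2_0(L,\mathbb{Z}_2)$ has $h(e,\cdot)=0$, hence $\delta^2(h)(e,b,c,d)=0$. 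Thus $\delta^2(g)\notin\mathrm{Im}(\Phi)$, and for $k=3$ the map $\Phi$ is not onto $\delta^2(\mathcal{C}^{2}(L,\mathbb{Z}_2))$.

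Your fallback, reading $\mathrm{Im}(\delta^2)$ as $\delta^2(\mathcal{C}^{k-1}_0(L,\mathbb{Z}_2))$, yields a true statement, but surjectivity becomes vacuous. It is also not the paper's reading. The paper's surjectivity step starts from an arbitrary $f\in\mathcal{C}^{k-1}(L,\mathbb{Z}_2)$, and Definition~\ref{def_f_cocycle} and Proposition~\ref{proposition_f_cocycle} need $\Phi^{-1}(\delta_\psi)$ for every $\psi$ with $\delta_\psi\in\delta^2(\mathcal{C}^{k-1}(L,\mathbb{Z}_2))$. Note too that your trigger for the fallback (``if the cancellation turns out to be incomplete'') would never fire: the cancellation is complete, and it is the truncation step that fails.

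In fairness, the paper's own proof has the same limitation, since its $f_0(a)$ is written only for one-variable cochains. What you have is a complete proof for $k=2$, the only case used in the computations. For $k\ge3$ the full-image statement is false, as the example shows.
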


\begin{proof} First, we claim that $\Phi$ is well-defined. To prove it, let $f,g\in \mathcal{C}_0^{k-1}(L,\mathbb{Z}_2)$ be such that $[\delta_f]=[\delta_g]$. Then, $\delta_{f-g}=\delta_f-\delta_g\in \mathcal{Z}^k(L,\mathbb{Z}_2)$. Thus, $\delta^2(f-g)=0$ and hence $\delta^2(f)=\delta^2(g)$.

Concerning injectivity, if
$\delta^2(f)=\delta^2(g)$ for a pair of coboundaries $\delta_f,\delta_g\in \mathcal{B}_0^k(L,\mathbb{Z}_2)$, then $\delta^2(f-g)=0$ and hence $\delta_f-\delta_g=\delta_{f-g}\in \mathcal{Z}^k(L,\mathbb{Z}_2)$. That is, $[\delta_f]=[\delta_g]$.

Now, we prove that $\Phi$ is onto. Let $\delta^2_f\in \mathrm{Im}(\delta^2)$ be the image by $\delta^2$ of a cochain $f\in\mathcal{C}^{k-1}(L,\mathbb{Z}_2)$. Let $f_0\in\mathcal{C}_0^{k-1}(L,\mathbb{Z}_2)$ be defined so that $f_0(a)=f(a)$ for all $a\in\mathcal{O}(L)$, and $f_0(a)=0$, otherwise. Then, $\delta_{f_0}\in \mathcal{B}_0^k(L,\mathbb{Z}_2)$, and $\delta^2_f=\delta^2_{f_0}$. That is, $\Phi([\delta_{f_0}])=\delta^2_f$.

Then, $\Phi$ is a group isomorphism because
\[\Phi(\delta_f+\delta_g)=\Phi(\delta_{f+g})=\delta^2(f+g)=\delta^2(f)+\delta^2(g)=\Phi(\delta_f)+\Phi(\delta_g)\]
for all $f,g\in \mathcal{C}_0^{k-1}(L,\mathbb{Z}_2)$.
\end{proof}

The next result shows the relationship between the abelian groups $\mathcal{Z}_{\mathcal{L}}^k(L,\mathbb{Z}_2)$ and $\mathcal{B}_{\mathcal{L}}^k(L,\mathbb{Z}_2)$. It follows readily from Definition \ref{def_f_cocycle} and Lemma \ref{lemma_quotient}.

\begin{proposition}\label{proposition_f_cocycle} The map
\[\begin{array}{cccc}
\Phi^{-1}\delta: & \mathcal{Z}_{\mathcal{L}}^k(L,\mathbb{Z}_2) & \to & \mathcal{B}_{\mathcal{L}}^k(L,\mathbb{Z}_2)\\
& \psi &\mapsto & \Phi^{-1}\delta(\psi):=\Phi^{-1}(\delta_\psi)
\end{array}\]
is a surjective group homomorfism with $\mathrm{ker}(\Phi^{-1}\delta)=\mathcal{Z}^k(L,\mathbb{Z}_2)$.
\end{proposition}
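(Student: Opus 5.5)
The plan is to compose $\delta$ (restricted to loop-cocycles) with the inverse of the isomorphism $\Phi$ from Lemma \ref{lemma_quotient}, and to check in turn that this composite is well defined, additive, onto and has the stated kernel.

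\emph{Well-definedness and additivity.} For $\psi\in\mathcal{Z}_{\mathcal{L}}^k(L,\mathbb{Z}_2)$, Definition \ref{def_f_cocycle} gives $\delta_\psi\in\mathrm{Im}(\delta^2)$. By Lemma \ref{lemma_quotient}, $\Phi$ is a bijection onto $\mathrm{Im}(\delta^2)$, so $\Phi^{-1}(\delta_\psi)$ is a well-defined element of $\mathcal{B}_{\mathcal{L}}^k(L,\mathbb{Z}_2)$. The coboundary operator is additive on cochains, since each term in \eqref{eq:coboundary} is linear in the cochain once written additively over $\mathbb{Z}_2$; hence $\delta_{\psi+\psi'}=\delta_\psi+\delta_{\psi'}$. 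Since $\Phi^{-1}$ is a group isomorphism, the composite $\Phi^{-1}\delta$ is a homomorphism.

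\emph{Kernel.} We have $\Phi^{-1}(\delta_\psi)=[0]$ if and only if $\delta_\psi=\Phi([0])=0$, since $\Phi$ is injective and $\Phi([0])=\delta^2(0)=0$. This is exactly the condition that $\psi$ is a $k$-cocycle. Conversely, every $k$-cocycle $\psi$ satisfies $\delta_\psi=0\in\mathrm{Im}(\delta^2)$, so $\mathcal{Z}^k(L,\mathbb{Z}_2)\subseteq\mathcal{Z}_{\mathcal{L}}^k(L,\mathbb{Z}_2)$, consistent with Lemma \ref{lemma_subgroup}. Therefore $\ker(\Phi^{-1}\delta)=\mathcal{Z}^k(L,\mathbb{Z}_2)$.

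\emph{Surjectivity.} Take a class $[\delta_f]\in\mathcal{B}_{\mathcal{L}}^k(L,\mathbb{Z}_2)$ with $f\in\mathcal{C}_0^{k-1}(L,\mathbb{Z}_2)$, and set $\psi:=\delta_f$. Then $\delta_\psi=\delta^2(f)\in\mathrm{Im}(\delta^2)$, so $\psi$ is a loop-cocycle. Moreover
\[\Phi^{-1}(\delta_\psi)=\Phi^{-1}(\delta^2(f))=[\delta_f].\]
This is consistent with Lemma \ref{lemma_subgroup}, which places $\mathcal{B}^k(L,\mathbb{Z}_2)$ inside $\mathcal{Z}_{\mathcal{L}}^k(L,\mathbb{Z}_2)$.

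The only delicate point, which I expect to be the main obstacle, is an indexing mismatch. In the statement, $\delta_\psi$ for a $k$-cochain $\psi$ lies in degree $k+1$, whereas $\Phi$ sends classes of $k$-coboundaries $\delta_f$, with $f$ a $(k-1)$-cochain, to $\delta^2(f)$, which also lies in degree $k+1$. So the degrees agree, and the only issue is that $\mathrm{Im}(\delta^2)$ must be read as the image of $\delta^2$ on $\mathcal{C}^{k-1}(L,\mathbb{Z}_2)$ in Definition \ref{def_f_cocycle} as well. I will state this convention explicitly at the start of the proof so that the composite $\Phi^{-1}\delta$ makes sense.
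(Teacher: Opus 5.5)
Your proof is correct and is exactly the unpacking the paper has in mind. The paper gives no separate argument, saying only that the statement follows readily from Definition~\ref{def_f_cocycle} and Lemma~\ref{lemma_quotient}; your checks are precisely what that remark leaves implicit: well-definedness from the bijectivity of $\Phi$ onto $\mathrm{Im}(\delta^2)$, additivity of $\delta$, the kernel via $\Phi([0])=\delta^2(0)=0$, surjectivity by taking $\psi:=\delta_f$ with $f\in\mathcal{C}_0^{k-1}(L,\mathbb{Z}_2)$, and reading $\delta^2$ as the map $\mathcal{C}^{k-1}(L,\mathbb{Z}_2)\to\mathcal{C}^{k+1}(L,\mathbb{Z}_2)$.
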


Lemma \ref{lemma_subgroup} and Proposition \ref{proposition_f_cocycle} allow us to show how the category of finite loops admits a cohomology with associativity obstructions.

\begin{theorem}\label{theorem_family_cochains} Loop-cocycles describe the short exact sequence
\begin{equation}\label{eq:sequence}
0\rightarrow \mathcal{Z}^k(L,\mathbb{Z}_2)\overset{\iota}{\hookrightarrow} \mathcal{Z}^k_\mathcal{L}(L,\mathbb{Z}_2) \xrightarrow{\Phi^{-1}\delta} \mathcal{B}_\mathcal{L}^k(L,\mathbb{Z}_2)\rightarrow  0
\end{equation}
where $\iota$ is the natural inclusion.
\end{theorem}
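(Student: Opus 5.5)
The proof reduces to checking exactness at the three positions of \eqref{eq:sequence}. Almost everything is already contained in Lemma~\ref{lemma_subgroup} and Proposition~\ref{proposition_f_cocycle}, so the plan is mainly to assemble those results in the right order.

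\emph{Exactness at $\mathcal{Z}^k(L,\mathbb{Z}_2)$.} First I check that $\iota$ is well defined. If $\psi\in\mathcal{Z}^k(L,\mathbb{Z}_2)$, then $\delta_\psi=0=\delta^2(0)\in\mathrm{Im}(\delta^2)$. Hence $\psi$ is a $k$-loop-cocycle in the sense of Definition~\ref{def_f_cocycle}, which is the content of Lemma~\ref{lemma_subgroup}. The inclusion is a homomorphism because both groups carry the pointwise operation, and it is injective because it is an inclusion.

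\emph{Exactness at $\mathcal{Z}^k_\mathcal{L}(L,\mathbb{Z}_2)$ and at $\mathcal{B}^k_\mathcal{L}(L,\mathbb{Z}_2)$.} Next I check that $\Phi^{-1}\delta$ is a well-defined homomorphism. For $\psi\in\mathcal{Z}^k_\mathcal{L}(L,\mathbb{Z}_2)$ we have $\delta_\psi\in\mathrm{Im}(\delta^2)$. By Lemma~\ref{lemma_quotient}, $\Phi$ is an isomorphism onto $\mathrm{Im}(\delta^2)$, so $\Phi^{-1}(\delta_\psi)$ makes sense. The map is additive because $\delta$ is linear over $\mathbb{Z}_2$ and $\Phi^{-1}$ is a homomorphism. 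Proposition~\ref{proposition_f_cocycle} then gives the two remaining facts. Its kernel statement, $\ker(\Phi^{-1}\delta)=\mathcal{Z}^k(L,\mathbb{Z}_2)=\mathrm{Im}(\iota)$, is exactly exactness in the middle. Its surjectivity statement is exactness on the right.

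\emph{Main obstacle: surjectivity.} This is the one point I would re-verify directly rather than simply cite. Take a class $[\delta_f]$ with $f\in\mathcal{C}^{k-1}_0(L,\mathbb{Z}_2)$, and put $\psi:=\delta_f$. Then $\delta_\psi=\delta^2(f)\in\mathrm{Im}(\delta^2)$, so $\psi$ is a loop-cocycle. Moreover $\Phi([\delta_f])=\delta^2(f)=\delta_\psi$, and therefore $\Phi^{-1}\delta(\psi)=[\delta_f]$. This shows that every class is attained, because $\mathcal{B}^k_0(L,\mathbb{Z}_2)\subseteq\mathcal{B}^k(L,\mathbb{Z}_2)\subseteq\mathcal{Z}^k_\mathcal{L}(L,\mathbb{Z}_2)$, where the second inclusion is again Lemma~\ref{lemma_subgroup}. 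The delicate part is the identification of $\mathrm{Im}(\delta^2)$, taken over all cochains, with the image of $\delta^2$ restricted to $\mathcal{C}^{k-1}_0$. I will rely on the surjectivity part of Lemma~\ref{lemma_quotient} for this identification and not reprove it here.
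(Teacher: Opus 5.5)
Your proposal follows the paper's route: the paper states the theorem as an immediate consequence of Lemma~\ref{lemma_subgroup} (for $\iota$) and Proposition~\ref{proposition_f_cocycle} (kernel and surjectivity of $\Phi^{-1}\delta$), and your explicit preimage $\psi=\delta_f$ of $[\delta_f]$ is a correct unpacking of the latter. One point deserves sharpening, namely the delicate identification you flag, $\mathrm{Im}(\delta^2)=\delta^2(\mathcal{C}^{k-1}_0(L,\mathbb{Z}_2))$: it is what makes $\Phi^{-1}\delta$ defined on all of $\mathcal{Z}^k_{\mathcal{L}}(L,\mathbb{Z}_2)$, not what makes it onto, and the truncation $f\mapsto f_0$ in Lemma~\ref{lemma_quotient} justifies it only for $k=2$ (for $k=3$ and $L$ a non-associative Moufang loop, $e\notin\mathcal{O}(L)$ and $\delta^2(f)(a,b,c,e)=f((ab)c,e)+f(a(bc),e)$, which vanishes for every $f\in\mathcal{C}^2_0(L,\mathbb{Z}_2)$ but not for every $f\in\mathcal{C}^2(L,\mathbb{Z}_2)$ once $(ab)c\neq a(bc)$), so your argument, like the paper's, is really a proof of the case $k=2$.
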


\begin{corollary}\label{corollary_family_cochains_0} It is verified that
\[\mathcal{Z}^k(L,\mathbb{Z}_2)\cong\mathcal{Z}_{\mathcal{L}}^k(L,\mathbb{Z}_2)/\mathcal{B}_{\mathcal{L}}^k(L,\mathbb{Z}_2).\]
\end{corollary}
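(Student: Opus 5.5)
The plan is to read the statement through the short exact sequence of Theorem~\ref{theorem_family_cochains} and use the fact that every group involved is a vector space over $\mathbb{F}_2$. Then the sequence splits, so $\mathcal{B}_\mathcal{L}^k(L,\mathbb{Z}_2)$ can be realised as a subgroup of $\mathcal{Z}^k_\mathcal{L}(L,\mathbb{Z}_2)$ complementary to $\mathcal{Z}^k(L,\mathbb{Z}_2)$. The quotient by that subgroup is then isomorphic to $\mathcal{Z}^k(L,\mathbb{Z}_2)$.

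First I will observe that $\mathcal{C}^k(L,\mathbb{Z}_2)$ is a finite $\mathbb{F}_2$-vector space under pointwise addition. Its subgroups $\mathcal{Z}^k$, $\mathcal{Z}^k_\mathcal{L}$, $\mathcal{B}_0^k$, and the quotient $\mathcal{B}^k_\mathcal{L}$ are therefore $\mathbb{F}_2$-vector spaces, and all the maps in \eqref{eq:sequence} are $\mathbb{F}_2$-linear. Since the quotient $\mathcal{B}^k_\mathcal{L}$ is not literally a subgroup of $\mathcal{Z}^k_\mathcal{L}$, I will make the quotient in the statement meaningful with an explicit section. Choose a linear complement $W$ of $\mathcal{B}_0^k(L,\mathbb{Z}_2)\cap\mathcal{Z}^k(L,\mathbb{Z}_2)$ inside $\mathcal{B}_0^k(L,\mathbb{Z}_2)$, and define $s:\mathcal{B}^k_\mathcal{L}(L,\mathbb{Z}_2)\to W$ by sending $[\delta_f]$ to its unique representative in $W$. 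This $s$ is a linear isomorphism onto $W$.

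Next I will check that $W\subseteq\mathcal{Z}^k_\mathcal{L}(L,\mathbb{Z}_2)$ and that $s$ is a section of $\Phi^{-1}\delta$. For $\delta_f\in W$ we have $\delta(\delta_f)=\delta^2(f)\in\mathrm{Im}(\delta^2)$, so $\delta_f$ is a loop-cocycle by Definition~\ref{def_f_cocycle}. By Lemma~\ref{lemma_quotient}, $\Phi^{-1}(\delta^2(f))=[\delta_f]$, hence $\Phi^{-1}\delta\circ s=\mathrm{id}$. The splitting lemma then applies, using exactness from Theorem~\ref{theorem_family_cochains}, i.e.\ $\ker(\Phi^{-1}\delta)=\mathcal{Z}^k$ from Proposition~\ref{proposition_f_cocycle}. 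It gives the internal direct sum
\[\mathcal{Z}^k_\mathcal{L}(L,\mathbb{Z}_2)=\mathcal{Z}^k(L,\mathbb{Z}_2)\oplus W.\]
Concretely, $\psi=(\psi-s\Phi^{-1}\delta(\psi))+s\Phi^{-1}\delta(\psi)$, and $W\cap\mathcal{Z}^k=0$ by the choice of $W$.

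Identifying $\mathcal{B}^k_\mathcal{L}$ with $W=s(\mathcal{B}^k_\mathcal{L})$, the projection onto the first summand induces $\mathcal{Z}^k_\mathcal{L}/\mathcal{B}^k_\mathcal{L}\cong\mathcal{Z}^k$. As a sanity check, there is a cheaper route: finite elementary abelian $2$-groups are determined by their order, and exactness gives $|\mathcal{Z}^k_\mathcal{L}|=|\mathcal{Z}^k|\cdot|\mathcal{B}^k_\mathcal{L}|$.

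The only real obstacle is interpretive: the statement treats $\mathcal{B}^k_\mathcal{L}$ as a subgroup of $\mathcal{Z}^k_\mathcal{L}$, and the isomorphism depends on choosing the section $s$. I would state explicitly that the identification is through such a section. The linear-algebra facts themselves are routine.
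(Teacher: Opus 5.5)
Your argument is correct, granting Theorem~\ref{theorem_family_cochains}, and it is more explicit than the paper's. The paper gives no separate proof and simply reads the corollary off the exact sequence \eqref{eq:sequence}. The vector-space structure and the splitting only appear afterwards, in Theorem~\ref{theorem_family_cochains_a} and the ``canonical section'' $\psi=(\psi+\delta_f)+\delta_f$ of Corollary~\ref{corollary_family_cochains}.

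Exactness alone yields $\mathcal{Z}^k_\mathcal{L}(L,\mathbb{Z}_2)/\mathcal{Z}^k(L,\mathbb{Z}_2)\cong\mathcal{B}^k_\mathcal{L}(L,\mathbb{Z}_2)$. To divide by $\mathcal{B}^k_\mathcal{L}$ instead, one must first realise it inside $\mathcal{Z}^k_\mathcal{L}$, and your complement $W$ of $\mathcal{B}^k_0\cap\mathcal{Z}^k$ in $\mathcal{B}^k_0$ does exactly that. It also fixes something the paper glosses over. An $f$ with $\delta^2(f)=\delta_\psi$ is determined only modulo $\ker\delta^2$, so $\delta_f$ is determined only modulo $\mathcal{B}^k_0\cap\mathcal{Z}^k$. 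The paper's section is therefore a well-defined homomorphism only after a choice such as yours.

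You are also right that the literal reading fails: dividing by the genuine subgroup $\mathcal{B}^k_0$ would give $\mathcal{Z}^k/(\mathcal{Z}^k\cap\mathcal{B}^k_0)$ instead. Your order-counting shortcut is equally valid, and it is all the abstract isomorphism requires.

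One caveat applies to your argument and the paper's alike. Both need $\Phi$ in Lemma~\ref{lemma_quotient} to be onto, i.e.\ $\delta^2(\mathcal{C}_0^{k-1}(L,\mathbb{Z}_2))=\delta^2(\mathcal{C}^{k-1}(L,\mathbb{Z}_2))$. The paper checks this only for $k=2$, where $\delta^2(f)(a,b,c)=f((ab)c)+f(a(bc))$. For $k=3$ one finds
\[\delta^2(f)(a,b,c,d)=f((ab)c,d)+f(a(bc),d)+f(a,(bc)d)+f(a,b(cd)).\]
In a non-associative inverse-property loop, such as a Moufang loop, $e\notin\mathcal{O}(L)$: there $(ab)c=e$ forces $c=b^{-1}a^{-1}$ and hence $a(bc)=e$. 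Consequently every $g\in\mathcal{C}_0^{2}(L,\mathbb{Z}_2)$ satisfies $\delta^2(g)(a,b,c,e)=0$. Now take a triple with $(ab)c\neq a(bc)$, and let $f$ be the cochain equal to $1$ at $((ab)c,e)$ and $0$ elsewhere; then $\delta^2(f)(a,b,c,e)=1$. Hence $|\mathcal{Z}^3_\mathcal{L}|=|\mathcal{Z}^3|\,|\mathrm{Im}(\delta^2)|>|\mathcal{Z}^3|\,|\mathcal{B}^3_\mathcal{L}|$. The statement therefore fails for $k=3$ in such loops, and your proof, like the paper's, is really a proof for $k=2$, the only case used later.
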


The abelian groups $\mathcal{Z}^k(L,\mathbb{Z}_2)$ and $\mathcal{Z}_{\mathcal{L}}^k(L,\mathbb{Z}_2)$ can be considered as vector spaces over the Galois field $\mathbb{F}_2$, once each loop-cocycle $\psi\in\mathcal{Z}_{\mathcal{L}}^k(L,\mathbb{Z}_2)$ is represented by a solution of the homogeneous linear system of equations described by the condition $\delta_\psi\in \mathrm{Im}(\delta^2)$. The following lemma illustrates this fact for $k=2$.

\begin{lemma}\label{lemma_zeros} The loop-cocycles in $\mathcal{Z}_{\mathcal{L}}^2(L,\mathbb{Z}_2)$ are identified with the solutions in $\mathbb{Z}_2^{n^2+n}$ of the homogeneous linear system of equations
\begin{equation}\label{eq:system}
\begin{cases}
\begin{array}{rl}x_{a,b}+x_{ab,c}+x_{a,bc}+x_{b,c}+x_{(ab)c}+x_{a(bc)}=0, & \text{ for all }
a,b,c\in L,\\
x_a=0, & \text{ for all } a\in L\setminus\mathcal{O}(L).
\end{array}
\end{cases}
\end{equation}
over $\mathbb{Z}_2$ in the set of variables $\left\{x_{a,b}\colon\, a,b\in L\right\}\cup \left\{x_a\colon\,  a\in L\right\}$. In particular, the subset of solutions that also satisfy $x_a=0$ for all $a\in \mathcal{O}(L)$ identifies the set $\mathcal{Z}^2(L,\mathbb{Z}_2)$.
\end{lemma}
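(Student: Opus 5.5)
The plan is to unfold Definition~\ref{def_f_cocycle} for $k=2$ in additive notation and then use Lemma~\ref{lemma_quotient}, together with its surjectivity argument, to replace an arbitrary preimage under $\delta^2$ by one supported on $\mathcal{O}(L)$.

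First I compute $\delta_\psi$ from \eqref{eq:coboundary} for $\psi\in\mathcal{C}^2(L,\mathbb{Z}_2)$ written additively:
\[
\delta_\psi(a,b,c)=\psi(a,b)+\psi(ab,c)+\psi(a,bc)+\psi(b,c).
\]
Next I compute $\delta^2(f)=\delta_{\delta_f}$ for $f\in\mathcal{C}^1(L,\mathbb{Z}_2)$, where $\delta_f(a,b)=f(a)+f(b)+f(ab)$. Expanding gives
\[
\delta^2(f)(a,b,c)=\delta_f(a,b)+\delta_f(ab,c)+\delta_f(a,bc)+\delta_f(b,c).
\]
Most terms cancel in pairs modulo $2$: $f(a)$, $f(b)$, $f(c)$, $f(ab)$ and $f(bc)$ each occur twice. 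What remains is $f((ab)c)+f(a(bc))$. This expression vanishes whenever $\mathcal{A}(a,b,c)=e$, which is consistent with \eqref{eq_pseudo}.

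With these formulas, the condition $\delta_\psi\in\mathrm{Im}(\delta^2)$ says that some $f\in\mathcal{C}^1$ satisfies
\[
\delta_\psi(a,b,c)=f((ab)c)+f(a(bc))\qquad\text{for all } a,b,c\in L.
\]
Setting $x_{a,b}=\psi(a,b)$ and $x_a=f(a)$, this is exactly the first family of equations in \eqref{eq:system}.

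The main step is to justify imposing $x_a=0$ off $\mathcal{O}(L)$. I will reuse the construction from the surjectivity part of Lemma~\ref{lemma_quotient}. Given $f$, let $f_0$ agree with $f$ on $\mathcal{O}(L)$ and vanish elsewhere. If $\mathcal{A}(a,b,c)\neq e$, then $(ab)c$ and $a(bc)$ both lie in $\mathcal{O}(L)$, so $f_0$ and $f$ give the same value of $f((ab)c)+f(a(bc))$. If $\mathcal{A}(a,b,c)=e$, then $(ab)c=a(bc)$ and both sides vanish. Hence $\delta^2(f)=\delta^2(f_0)$, and every loop-cocycle yields a solution of the full system \eqref{eq:system}. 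Conversely, any solution yields $\psi$ with $\delta_\psi=\delta^2(f)$, where $f(a)=x_a$, so $\psi$ is a loop-cocycle.

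One point needs stating explicitly: the correspondence is a surjective linear projection $(x_{a,b},x_a)\mapsto\psi$, not a bijection. Its fibers are cosets of the solutions with all $x_{a,b}=0$, that is, of the kernel of $\delta^2$ restricted to $\mathcal{C}^1_0$. I read ``identified'' in this sense, and this is the point I expect to be the most delicate in the wording.

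For the final claim, setting $x_a=0$ also on $\mathcal{O}(L)$ forces $f=0$. The system then reduces to $\delta_\psi=0$, which is the cocycle condition \eqref{HdLCoc}. Conversely, any $\psi\in\mathcal{Z}^2(L,\mathbb{Z}_2)$ together with all $x_a=0$ is a solution. So these solutions are exactly $\mathcal{Z}^2(L,\mathbb{Z}_2)$, and in this case the identification is a bijection.
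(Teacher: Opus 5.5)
Your proposal is correct and takes essentially the paper's route. The paper moves from an arbitrary preimage under $\delta^2$ to one supported on $\mathcal{O}(L)$ through $\Phi^{-1}$ from Lemma~\ref{lemma_quotient}, which is exactly the truncation $f\mapsto f_0$ you reuse. You also state explicitly the identity $\delta^2(f)(a,b,c)=f((ab)c)+f(a(bc))$, which the paper needs but leaves implicit.

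Your caveat about the word ``identified'' is justified. The map $(x_{a,b},x_a)\mapsto\psi$ is surjective but not injective for non-associative $L$. For example, $x_{a,b}=0$ together with $x_a=1$ exactly on $\mathcal{O}(L)$ is a nonzero solution that corresponds to $\psi=0$.
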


\begin{proof} Every solution of \eqref{eq:system} is a vector $s:=\left(s_{0,0},\ldots,s_{n-1,n-1},s_0,\ldots,s_{n-1}\right)\in \mathbb{Z}_2^{n^2+n}$ where each $s_{a,b}$ is related to the variable $x_{a,b}$, and each $s_a$ is the component related to the variable $x_a$. More precisely, the vector $s$ is uniquely identified with a loop-cocycle $\psi_s\in\mathcal{Z}_\mathcal{L}^2(L,\mathbb{Z}_2)$, where $\psi_s(a,b):=s_{a,b}$ for all $a,b\in L$. Moreover, there is a cochain $f_s\in\mathcal{C}^1(L,\mathbb{Z}_2)$ such that $f_s(a):=s_a$ for all $a\in L$, and $[\delta_{f_s}]=\Phi^{-1}(\delta_{\psi_s})$. The last statement follows readily.
\end{proof}

As a consequence, the following result holds.

\begin{proposition} $\mathcal{B}^k_\mathcal{L}(L,\mathbb{Z}_2)$ is a vector space over $\mathbb{F}_2$.
\end{proposition}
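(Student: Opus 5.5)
The plan is to show that $\mathcal{B}^k_\mathcal{L}(L,\mathbb{Z}_2)$ is an abelian group of exponent $2$. Any such group is a vector space over $\mathbb{F}_2$ in a unique way, with scalar multiplication $0\cdot x=0$ and $1\cdot x=x$. Abelianness is already recorded in the paper: $\mathcal{B}^k_\mathcal{L}(L,\mathbb{Z}_2)$ is a quotient of the abelian group $\mathcal{B}_0^k(L,\mathbb{Z}_2)$ by the subgroup $\mathcal{B}_0^k(L,\mathbb{Z}_2)\cap\mathcal{Z}^k(L,\mathbb{Z}_2)$, and a quotient of an abelian group is abelian.

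For the exponent-$2$ property I will work additively, as the paper does in this section. Every cochain takes values in $\mathbb{Z}_2$, so $f+f=0$ for every $f\in\mathcal{C}^{k-1}_0(L,\mathbb{Z}_2)$. The coboundary operator is additive in the cochain (this additivity is used in the proof of Lemma \ref{lemma_quotient}), so $\delta_f+\delta_f=\delta_{f+f}=\delta_0=0$. Passing to classes gives $[\delta_f]+[\delta_f]=[0]$. The vector space axioms then need only a brief check. The distributivity $(1+1)x=0=x+x$ is exactly the exponent-$2$ statement, and the remaining axioms reduce to the group axioms.

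As a cross-check I would also give a second route. By Lemma \ref{lemma_quotient}, $\mathcal{B}^k_\mathcal{L}(L,\mathbb{Z}_2)\cong\mathrm{Im}(\delta^2)$. This image is a subgroup of $\mathcal{C}^{k+1}(L,\mathbb{Z}_2)=\mathbb{Z}_2^{L^{k+1}}$, which is an $\mathbb{F}_2$-vector space, and every subgroup of an $\mathbb{F}_2$-vector space is a subspace, since closure under the scalars $0$ and $1$ is automatic. Transporting this structure along $\Phi$ yields the claim. Alternatively, Lemma \ref{lemma_zeros} for $k=2$ exhibits $\mathcal{B}^2_\mathcal{L}$ as a quotient of the solution space of the homogeneous linear system \eqref{eq:system}.

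I expect no real obstacle. The only point needing care is that scalar multiplication must be well defined on classes. This holds because the subgroup being factored out, $\mathcal{B}_0^k(L,\mathbb{Z}_2)\cap\mathcal{Z}^k(L,\mathbb{Z}_2)$, is itself closed under the scalars $0$ and $1$ trivially, so it is an $\mathbb{F}_2$-subspace, and the quotient of a vector space by a subspace is a vector space.
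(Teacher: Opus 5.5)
Your proposal is correct and is essentially the paper's argument. The paper notes that $\mathcal{B}_0^k(L,\mathbb{Z}_2)$ is an $\mathbb{F}_2$-space as the image of $\mathcal{C}_0^{k-1}(L,\mathbb{Z}_2)$ under the linear map $\delta$, then quotients by the subspace $\mathcal{B}_0^k(L,\mathbb{Z}_2)\cap\mathcal{Z}^k(L,\mathbb{Z}_2)$, which is your closing paragraph; your exponent-$2$ formulation and the cross-check via $\Phi$ are repackagings of the same elementary fact. A small bonus of your version is that it avoids the paper's side remark that $\mathcal{C}_0^{k-1}(L,\mathbb{Z}_2)$ has dimension $|\mathcal{O}(L)|$. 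That remark is only accurate for $k=2$ (in general the dimension is $|\mathcal{O}(L)|^{k-1}$), and it is not needed for the statement.
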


\begin{proof} It is readily verified that $\mathcal{C}_0^{k-1}(L,\mathbb{Z}_2)$ is an $|\mathcal{O}(L)|$-dimensional vector space over $\mathbb{F}_2$. Then, $\mathcal{B}^k_0(L,\mathbb{Z}_2)$ is also a vector space over $\mathbb{F}_2$ because the coboundary operator $\delta$ is linear. Since $\mathcal{Z}^k(L,\mathbb{Z}_2)$ is also a vector space over $\mathbb{F}_2$, the result holds.
\end{proof}

Therefore, the short exact sequence \eqref{eq:sequence} splits and the following result holds.

\begin{theorem}\label{theorem_family_cochains_a} It is verified that
\begin{equation}\label{eq:decomposition}
\mathcal{Z}_\mathcal{L}^k(L,\mathbb{Z}_2)\cong \mathcal{Z}^k(L,\mathbb{Z}_2)\oplus \mathcal{B}_\mathcal{L}^k(L,\mathbb{Z}_2).
\end{equation}
As a consequence,
\[\dim_{\mathbb{F}_2}\left(\mathcal{Z}_\mathcal{L}^k(L,\mathbb{Z}_2)\right)=\dim_{\mathbb{F}_2}\left(\mathcal{Z}^k(L,\mathbb{Z}_2)\right)+\dim_{\mathbb{F}_2}\left(\mathcal{B}_\mathcal{L}^k(L,\mathbb{Z}_2)\right).\]
\end{theorem}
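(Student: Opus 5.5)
The plan is to read the decomposition \eqref{eq:decomposition} off the short exact sequence \eqref{eq:sequence} of Theorem \ref{theorem_family_cochains}, using the fact that all three groups in it are vector spaces over $\mathbb{F}_2$. Since the coefficient group is $\mathbb{Z}_2$, every cochain group $\mathcal{C}^k(L,\mathbb{Z}_2)$ is a finite-dimensional $\mathbb{F}_2$-vector space under pointwise addition, and the coboundary operator $\delta$ is $\mathbb{F}_2$-linear. Hence the following are all $\mathbb{F}_2$-subspaces of $\mathcal{C}^k(L,\mathbb{Z}_2)$:
\begin{itemize}
\item $\mathcal{Z}^k(L,\mathbb{Z}_2)$, the kernel of $\delta$;
\item $\mathcal{Z}_\mathcal{L}^k(L,\mathbb{Z}_2)$, the preimage under $\delta$ of the subspace $\mathrm{Im}(\delta^2)$.
\end{itemize}
Moreover, $\mathcal{B}_\mathcal{L}^k(L,\mathbb{Z}_2)$ is a vector space by the preceding proposition. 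On $\mathbb{F}_2$-spaces, group homomorphisms are automatically linear, since scalars are only $0$ and $1$. So $\iota$ and $\Phi^{-1}\delta$ are linear maps and \eqref{eq:sequence} is a short exact sequence of $\mathbb{F}_2$-vector spaces.

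Next I will construct an explicit splitting. Choose a basis $\{[\delta_{f_1}],\ldots,[\delta_{f_r}]\}$ of $\mathcal{B}_\mathcal{L}^k(L,\mathbb{Z}_2)$. By surjectivity of $\Phi^{-1}\delta$ (Proposition \ref{proposition_f_cocycle}), pick loop-cocycles $\psi_i\in\mathcal{Z}_\mathcal{L}^k(L,\mathbb{Z}_2)$ with $\Phi^{-1}(\delta_{\psi_i})=[\delta_{f_i}]$. Define $\sigma:\mathcal{B}_\mathcal{L}^k(L,\mathbb{Z}_2)\to\mathcal{Z}_\mathcal{L}^k(L,\mathbb{Z}_2)$ by sending $[\delta_{f_i}]\mapsto\psi_i$ and extending linearly. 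Then $\Phi^{-1}\delta\circ\sigma$ is the identity. The map
\[(\varphi,\beta)\mapsto \iota(\varphi)+\sigma(\beta)\]
from $\mathcal{Z}^k(L,\mathbb{Z}_2)\oplus\mathcal{B}_\mathcal{L}^k(L,\mathbb{Z}_2)$ to $\mathcal{Z}_\mathcal{L}^k(L,\mathbb{Z}_2)$ is then an isomorphism:
\begin{itemize}
\item \emph{Surjective}: for any $\psi$, the difference $\psi-\sigma(\Phi^{-1}\delta(\psi))$ lies in $\ker(\Phi^{-1}\delta)=\mathcal{Z}^k(L,\mathbb{Z}_2)$.
\item \emph{Injective}: if $\iota(\varphi)+\sigma(\beta)=0$, applying $\Phi^{-1}\delta$ gives $\beta=0$, and then $\varphi=0$ since $\iota$ is injective.
\end{itemize}
This gives \eqref{eq:decomposition}.

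The dimension formula follows either from taking dimensions in the direct sum, or directly from rank–nullity applied to $\Phi^{-1}\delta$, with kernel $\mathcal{Z}^k(L,\mathbb{Z}_2)$ and image $\mathcal{B}_\mathcal{L}^k(L,\mathbb{Z}_2)$. All spaces are finite-dimensional because $L$ is finite.

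The only point needing care is the linearity of $\Phi^{-1}\delta$ and of $\Phi$. The paper presents $\Phi$ as a group isomorphism defined on classes $[\delta_f]$ with $f\in\mathcal{C}_0^{k-1}(L,\mathbb{Z}_2)$, so I will check that $\mathcal{Z}_\mathcal{L}^k(L,\mathbb{Z}_2)$ is closed under addition. This holds because $\delta_{\psi+\psi'}=\delta_\psi+\delta_{\psi'}$ and $\mathrm{Im}(\delta^2)$ is a subgroup. Additivity of $\Phi^{-1}\delta$ then follows from additivity of $\delta$ and Lemma \ref{lemma_quotient}. I expect this bookkeeping to be the only, mild, obstacle; the splitting itself is the standard fact that every short exact sequence of vector spaces splits.
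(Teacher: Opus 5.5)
Your argument is correct relative to the preceding results, and it follows the paper's route: the paper only observes that, since $\mathcal{B}_\mathcal{L}^k(L,\mathbb{Z}_2)$ is an $\mathbb{F}_2$-space, the exact sequence \eqref{eq:sequence} splits, and your basis-chosen section $\sigma$ and rank--nullity count spell that out. The step actually carrying the weight in both versions is not linearity, however, but the surjectivity of $\Phi$ in Lemma \ref{lemma_quotient}, and its restriction-to-$\mathcal{O}(L)$ argument only works for $k=2$: for $k=3$ and a non-associative Moufang loop (where $e\notin\mathcal{O}(L)$) one has $\delta^2 f(a,b,c,e)=f((ab)c,e)+f(a(bc),e)$, which vanishes for every $f\in\mathcal{C}_0^{2}(L,\mathbb{Z}_2)$ but not for $f=\mathbf{1}_{(x,e)}$ with $x\in\mathcal{O}(L)$, so $\Phi$ is not onto and the statement is only established for $k=2$.
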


Although the exactness of the sequence \eqref{eq:sequence} is a universal property of the category of finite loops, the potential use of a specific loop for combinatorial designs (such as Hadamard matrices) lies exclusively in the kernel topology $\mathcal{Z}_\mathcal{L}^k(L,\mathbb{Z}_2)$. In this regard, the following result holds readily from the canonical section induced by $\Phi$ in \eqref{eq:sequence}.

\begin{corollary}\label{corollary_family_cochains} Let $\psi\in \mathcal{Z}_\mathcal{L}^k(L,\mathbb{Z}_2)$ be such that $\delta_\psi=\delta^2(f)$ for some $f\in \mathcal{C}^{k-1}(L,\mathbb{Z}_2)$. Then, $\psi$ admits the decomposition $\psi=(\psi+\delta_f)+\delta_f$, where $\psi+\delta_f\in \mathcal{Z}^k(L,\mathbb{Z}_2)$ and $\delta_f\in \mathcal{B}_0^k(L,\mathbb{Z}_2)$.
\end{corollary}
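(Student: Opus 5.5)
The decomposition $\psi=(\psi+\delta_f)+\delta_f$ holds trivially over $\mathbb{Z}_2$, since $\delta_f+\delta_f=0$ pointwise. So the work is in the two membership claims, and I will check each directly from the hypothesis $\delta_\psi=\delta^2(f)$, using only that the coboundary operator $\delta$ is additive over $\mathbb{Z}_2$.

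For the first claim, I will compute $\delta(\psi+\delta_f)=\delta_\psi+\delta(\delta_f)=\delta_\psi+\delta^2(f)=\delta^2(f)+\delta^2(f)=0$. Hence $\psi+\delta_f\in\mathcal{Z}^k(L,\mathbb{Z}_2)$. This matches the kernel description in Proposition \ref{proposition_f_cocycle}: the component $\psi+\delta_f$ lies in the kernel of $\Phi^{-1}\delta$.

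For the second claim, $\delta_f$ is by construction a coboundary in $\mathcal{B}^k(L,\mathbb{Z}_2)$. The issue is that membership in $\mathcal{B}_0^k(L,\mathbb{Z}_2)$ requires $f\in\mathcal{C}_0^{k-1}(L,\mathbb{Z}_2)$, and the statement allows an arbitrary $f\in\mathcal{C}^{k-1}(L,\mathbb{Z}_2)$. I will handle this as in the surjectivity part of Lemma \ref{lemma_quotient}. I replace $f$ by its truncation $f_0\in\mathcal{C}_0^{k-1}(L,\mathbb{Z}_2)$, which agrees with $f$ on arguments involving $\mathcal{O}(L)$ and vanishes elsewhere. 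That lemma gives $\delta^2(f_0)=\delta^2(f)$. Consequently $\delta_\psi=\delta^2(f_0)$ as well, and the computation of the previous paragraph applies verbatim with $f_0$ in place of $f$. So, after this harmless choice of representative, the decomposition $\psi=(\psi+\delta_{f_0})+\delta_{f_0}$ has $\delta_{f_0}\in\mathcal{B}_0^k(L,\mathbb{Z}_2)$.

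Finally, I will note that $[\delta_{f_0}]=\Phi^{-1}(\delta_\psi)$. This identifies the decomposition with the canonical section of the split sequence \eqref{eq:sequence}, in agreement with Theorem \ref{theorem_family_cochains_a}.

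The main obstacle is this representative issue in the second claim: the statement as written holds literally only if $f$ is taken in $\mathcal{C}_0^{k-1}(L,\mathbb{Z}_2)$, or after the replacement by $f_0$. That replacement relies on the equality $\delta^2(f_0)=\delta^2(f)$ from the proof of Lemma \ref{lemma_quotient}, which I will take as given and not re-derive.
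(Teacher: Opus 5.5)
You follow the paper's route. The paper justifies the corollary only by ``the canonical section induced by $\Phi$''. That amounts to replacing $f$ by the representative $f_0\in\mathcal{C}_0^{k-1}(L,\mathbb{Z}_2)$ from the surjectivity part of Lemma~\ref{lemma_quotient} and then computing $\delta(\psi+\delta_{f_0})=\delta_\psi+\delta^2(f_0)=0$, exactly as you do. Your remark that the literal statement needs this change of representative is correct. Suppose $e\notin\mathcal{O}(L)$, which holds in every Moufang loop since there $(ab)c=e$ iff $a(bc)=e$. Take $f$ with $f(e)=1$. Then $\psi:=\delta_f\in\mathcal{Z}_\mathcal{L}^2(L,\mathbb{Z}_2)$. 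But $\delta_f=\delta_g$ with $g\in\mathcal{C}_0^1(L,\mathbb{Z}_2)$ would make $f+g$ a homomorphism $L\to\mathbb{Z}_2$, forcing $f(e)=g(e)=0$.

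The genuine problem is the step you take on faith, $\delta^2(f_0)=\delta^2(f)$: it holds only for $k=2$. There $\delta^2(f)(a,b,c)=f((ab)c)+f(a(bc))$. This vanishes on associative triples and otherwise only sees values on $\mathcal{O}(L)$, so truncation is harmless and your proof is complete. For $k=3$, however,
\[\delta^2(f)(a,b,c,d)=f((ab)c,d)+f(a(bc),d)+f(a,(bc)d)+f(a,b(cd)).\]
So $\delta^2(f)$ reads values $f(x,d)$ where only $x$ is forced into $\mathcal{O}(L)$. Elements of $\mathcal{C}_0^{2}(L,\mathbb{Z}_2)$, by contrast, must vanish as soon as \emph{one} coordinate lies outside $\mathcal{O}(L)$. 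Your phrase ``agrees with $f$ on arguments involving $\mathcal{O}(L)$'' hides exactly this. Read as `some coordinate in $\mathcal{O}(L)$', it preserves $\delta^2$ but gives $f_0\notin\mathcal{C}_0^{k-1}$; read as `all coordinates', it is the reverse.

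Here is a concrete failure. Take $e\notin\mathcal{O}(L)$, a non-associative triple $(a,b,c)$, and $f$ the indicator of $((ab)c,e)$. Then $\delta^2(f)(a,b,c,e)=1$. On the other hand, $\delta^2(g)(a,b,c,e)=g((ab)c,e)+g(a(bc),e)=0$ for every $g\in\mathcal{C}_0^2(L,\mathbb{Z}_2)$. So for $\psi=\delta_f$ there is no decomposition $\psi=z+\delta_g$ with $z\in\mathcal{Z}^3(L,\mathbb{Z}_2)$ and $\delta_g\in\mathcal{B}_0^3(L,\mathbb{Z}_2)$ at all, since it would give $\delta^2(f)=\delta_\psi=\delta^2(g)$.

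Thus your argument, like the paper's, proves the corollary for $k=2$, the only case used in Section~\ref{sec:hadamard}, or for loops with $\mathcal{O}(L)=L$. For general $k$ you would have to redefine $\mathcal{C}_0^{k-1}$ as the cochains vanishing on $(L\setminus\mathcal{O}(L))^{k-1}$. With that definition truncation does preserve $\delta^2$, and your proof goes through verbatim.
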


The elegance of this result lies in the role of cochains. In the group-theoretic case, the image of $s$ is absorbed into the kernel because $\delta f$ is necessarily associative. That is, the cohomology is frozen within the associative locus. However, the transition to loops acts as a decompactification of the search space. The section $s$ provides the geometric degrees of freedom necessary to escape the rigidity of the group axioms. This proves that the non-associativity of the loop-cocycle is entirely concentrated in the coboundary term. We effectively parameterize the entire loop-cocycle vector space as a linear translation of the kernel $\mathcal{Z}^k(L,\mathbb{Z}_2)$ by $\mathcal{B}^k_\mathcal{L}(L,\mathbb{Z}_2)$. This procedure unlocks up to $|\mathcal{O}(L)|^{k-1}$ additional transverse degrees of freedom that were previously hidden by the group axioms. This allows us to navigate the vast combinatorial landscape of the loop-cocycle space by means of a structured exploration of fibers.

In what follows, we show how the automorphism group $\mathrm{Aut}(L)$ of the loop $L$ makes easier the enumeration of loop-cocycles. Recall here that an {\em automorphism} of $L$ is a bijection $\phi:L\to L$ such that
\begin{equation}\label{eq_automorphism}
\phi(a)\phi(b)=\phi(ab)
\end{equation}
for all $a,b\in L$. The group $\mathrm{Aut}(L)$ acts on $\mathcal{C}^k(L,\mathbb{Z}_2)$ by $\phi\cdot f := f^\phi$ for all $\phi\in\mathrm{Aut}(L)$ and $f\in \mathcal{C}^k(L,\mathbb{Z}_2)$, where $f^\phi\in \mathcal{C}^k(L,\mathbb{Z}_2)$ is described so that
\begin{equation}\label{eq_action}
f^\phi(a_1,\ldots,a_k) := f(\phi(a_1),\ldots,\phi(a_k))
\end{equation}
for all $a_1,\ldots,a_k\in L$.

\begin{lemma}\label{lemma_aut} Every automorphism in $\mathrm{Aut}(L)$ preserves the set $\mathcal{O}(L)$ and the groups $\mathcal{C}_0^{k-1}(L)$, $\mathcal{B}_0^k(L)$ and $\mathcal{B}_L^k(L)$.
\end{lemma}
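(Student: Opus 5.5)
The plan is to first show that every $\phi\in\mathrm{Aut}(L)$ commutes with the loop structure in all the ways that enter the definitions. Then each preservation statement follows from the previous one, in the order $\mathcal{O}(L)$, $\mathcal{C}_0^{k-1}$, $\mathcal{B}_0^k$, $\mathcal{B}_\mathcal{L}^k$.

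\textbf{Step 1: the obstruction set.} Since $\phi$ is a bijective homomorphism, it also preserves the divisions: from $a(a\backslash b)=b$ we get $\phi(a)\phi(a\backslash b)=\phi(b)$, so $\phi(a\backslash b)=\phi(a)\backslash\phi(b)$, and similarly for $/$. Hence
\[\phi(\mathcal{A}(a,b,c))=\mathcal{A}(\phi(a),\phi(b),\phi(c)),\]
and since $\phi(e)=e$, we have $\mathcal{A}(a,b,c)\neq e$ if and only if $\mathcal{A}(\phi(a),\phi(b),\phi(c))\neq e$. Moreover $\phi((ab)c)=(\phi(a)\phi(b))\phi(c)$ and $\phi(a(bc))=\phi(a)(\phi(b)\phi(c))$. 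Therefore $\phi(\mathcal{O}(L))\subseteq\mathcal{O}(L)$. As $L$ is finite and $\phi$ is injective, equality holds, and the complement $L\setminus\mathcal{O}(L)$ is preserved as well.

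\textbf{Step 2: $\mathcal{C}_0^{k-1}$.} Let $f\in\mathcal{C}_0^{k-1}(L,\mathbb{Z}_2)$ and consider $f^\phi(a_1,\ldots,a_{k-1})$ with some $a_i\notin\mathcal{O}(L)$. By Step 1, $\phi(a_i)\notin\mathcal{O}(L)$, so $f^\phi(a_1,\ldots,a_{k-1})=f(\phi(a_1),\ldots,\phi(a_{k-1}))=0$. Thus $f^\phi\in\mathcal{C}_0^{k-1}$. The action is linear, so this subgroup is invariant.

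\textbf{Step 3: equivariance of $\delta$.} This is the key identity:
\[\delta(f^\phi)=(\delta f)^\phi.\]
It follows term by term from \eqref{eq:coboundary}, using $\phi(a_ia_{i+1})=\phi(a_i)\phi(a_{i+1})$. It gives $\delta_f^\phi=\delta_{f^\phi}\in\mathcal{B}_0^k$ for every $f\in\mathcal{C}_0^{k-1}$, so $\mathcal{B}_0^k$ is preserved. The same identity shows that cocycles go to cocycles, since $\delta(\psi^\phi)=(\delta\psi)^\phi=0$ whenever $\delta\psi=0$. Hence $\mathcal{B}_0^k\cap\mathcal{Z}^k$ is invariant.

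\textbf{Step 4: the quotient $\mathcal{B}_\mathcal{L}^k$.} The action on the quotient is given by $[\delta_f]\mapsto[\delta_{f^\phi}]$. It is well defined because the subgroup being factored out, $\mathcal{B}_0^k\cap\mathcal{Z}^k$, is invariant by Step 3. It is bijective because $\phi^{-1}$ supplies the inverse action.

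\textbf{Main obstacle.} The only point needing care is Step 1: one must check that automorphisms commute with both divisions, so that they transport the associator and hence the condition $\mathcal{A}\neq e$. Everything else is formal bookkeeping.
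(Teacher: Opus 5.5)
Your proof is correct and follows the same route as the paper. Transporting the non-associativity condition by $\phi$ gives $\phi(\mathcal{O}(L))=\mathcal{O}(L)$, hence invariance of $\mathcal{C}_0^{k-1}$; the equivariance $(\delta_f)^\phi=\delta_{f^\phi}$ (which the paper also states for $\delta^2$) then gives invariance of $\mathcal{B}_0^k$ and of $\mathcal{B}_0^k\cap\mathcal{Z}^k$, hence of $\mathcal{B}_\mathcal{L}^k$. The only difference is that the paper uses directly that $\mathcal{A}(a,b,c)\neq e$ if and only if $(ab)c\neq a(bc)$, so your detour through preservation of the divisions is harmless but not needed.
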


\begin{proof} Let $\phi\in \mathrm{Aut}(L)$. From \eqref{eq_automorphism}, we have
\[(ab)c\neq a(bc)\Leftrightarrow (\phi(a)\phi(b))\phi(c)\neq \phi(a)(\phi(b)\phi(c))\]
for all $a,b,c\in L$. Thus, $\phi(\mathcal{O}(L))=\mathcal{O}(L)$ and therefore, the action \eqref{eq_action} preserves the group $\mathcal{C}_0^{k-1}(L)$. Moreover, from \eqref{eq_automorphism} and the definition of the coboundary operator, we have $(\delta_f)^\phi=\delta_{f^\phi} $ and $(\delta^2(f))^\phi=\delta^2(f^\phi)$ for all $f\in \mathcal{C}^k(L)$. Thus, $\phi$ preserves the groups $\mathcal{B}_0^k(L)$ and $\mathcal{B}_L^k(L)$.
\end{proof}

\begin{proposition}\label{proposition_aut} Every automorphism in $\mathrm{Aut}(L)$ preserves the group $\mathcal{Z}_\mathcal{L}^k(L,\mathbb{Z}_2)$. Moreover, it preserves the direct sum decomposition \eqref{eq:decomposition}.
\end{proposition}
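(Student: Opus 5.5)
The key fact is that the action of an automorphism commutes with the coboundary operator, and everything else follows from Lemma \ref{lemma_aut} and the earlier structural results.

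\emph{Equivariance.} Fix $\phi\in\mathrm{Aut}(L)$. Using \eqref{eq_automorphism} and \eqref{eq:coboundary}, I will check that $(\delta_g)^\phi=\delta_{g^\phi}$ for every cochain $g$ of any degree. Each factor $g(a_1,\ldots,a_ia_{i+1},\ldots)$ evaluated at $\phi(a_1),\ldots,\phi(a_{k+1})$ becomes $g(\phi(a_1),\ldots,\phi(a_ia_{i+1}),\ldots)$, since $\phi(a_i)\phi(a_{i+1})=\phi(a_ia_{i+1})$. This is the identity already asserted in the proof of Lemma \ref{lemma_aut}. Iterating it gives $(\delta^2(f))^\phi=\delta^2(f^\phi)$.

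\emph{$\mathcal{Z}_\mathcal{L}^k$ is preserved.} Let $\psi\in\mathcal{Z}_\mathcal{L}^k(L,\mathbb{Z}_2)$, so $\delta_\psi=\delta^2(f)$ for some $f$. Then
\[\delta_{\psi^\phi}=(\delta_\psi)^\phi=(\delta^2(f))^\phi=\delta^2(f^\phi)\in\mathrm{Im}(\delta^2),\]
so $\psi^\phi\in\mathcal{Z}_\mathcal{L}^k(L,\mathbb{Z}_2)$. The map $\psi\mapsto\psi^\phi$ is linear, and its inverse is the action of $\phi^{-1}$, so it is an automorphism of this group. The same computation with $f=0$ shows that $\mathcal{Z}^k(L,\mathbb{Z}_2)$ is preserved.

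\emph{The decomposition \eqref{eq:decomposition} is preserved.} I will use the explicit splitting of Corollary \ref{corollary_family_cochains}. We may take $f\in\mathcal{C}_0^{k-1}$, as in the surjectivity part of Lemma \ref{lemma_quotient}. Then $\psi=(\psi+\delta_f)+\delta_f$, with the first summand in $\mathcal{Z}^k$ and $\delta_f\in\mathcal{B}_0^k$. Applying $\phi$ gives
\[\psi^\phi=(\psi^\phi+\delta_{f^\phi})+\delta_{f^\phi}.\]
By the previous step the first summand lies in $\mathcal{Z}^k$, and by Lemma \ref{lemma_aut} we have $f^\phi\in\mathcal{C}_0^{k-1}$, so $\delta_{f^\phi}\in\mathcal{B}_0^k$. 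Both summands therefore stay in their respective components.

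\emph{Compatibility with $\Phi$.} It remains to see that the induced action on $\mathcal{B}_\mathcal{L}^k$, given by $[\delta_f]\mapsto[\delta_{f^\phi}]$, is compatible with the projection $\Phi^{-1}\delta$. This follows from
\[\Phi([\delta_{f^\phi}])=\delta^2(f^\phi)=\Phi([\delta_f])^\phi.\]
Hence the short exact sequence \eqref{eq:sequence} is a sequence of $\mathrm{Aut}(L)$-modules, and the section $[\delta_f]\mapsto\delta_f$ is equivariant.

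\emph{Main obstacle.} Well-definedness of that section is the delicate point. The choice of $f$ is unique only up to $\ker\delta^2\cap\mathcal{C}_0^{k-1}$, so the splitting is not canonical. Because $\mathcal{Z}^k$ is $\phi$-stable, this ambiguity only moves mass between $\mathcal{Z}^k$ and a $\phi$-stable complement. I will therefore state the preservation at the level of the summand $\mathcal{Z}^k$ together with the quotient $\mathcal{B}_\mathcal{L}^k$, which is invariantly defined.
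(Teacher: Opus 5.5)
Your argument is the paper's proof almost line for line. It uses equivariance $(\delta_g)^\phi=\delta_{g^\phi}$ and $(\delta^2 f)^\phi=\delta^2(f^\phi)$, hence $\delta_{\psi^\phi}=\delta^2(f^\phi)$ and invariance of $\mathcal{Z}^k_\mathcal{L}(L,\mathbb{Z}_2)$. It then transports $\psi=(\psi+\delta_f)+\delta_f$ to $\psi^\phi=(\psi^\phi+\delta_{f^\phi})+\delta_{f^\phi}$ and checks $\Phi([\delta_{f^\phi}])=\Phi([\delta_f])^\phi$. Two of your additions improve on the paper:
\begin{itemize}
\item Insisting on $f\in\mathcal{C}^{k-1}_0(L,\mathbb{Z}_2)$ is actually needed. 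Corollary~\ref{corollary_family_cochains} and the paper's proof take $f$ arbitrary, and then $\delta_f\in\mathcal{B}^k_0$ is unjustified.
\item Your observation that $[\delta_f]\mapsto\delta_f$ is not well defined is correct, and the paper passes over it when it calls this map the ``canonical section'' and asserts its equivariance. For non-associative $L$ and $k=2$, the cochain $g=1_{\mathcal{O}(L)}$ satisfies $g((ab)c)=g(a(bc))$ for all $a,b,c$, so $\delta_g\in\mathcal{B}^2_0\cap\mathcal{Z}^2$. Moreover $\delta_g\neq 0$ unless $L\setminus\mathcal{O}(L)$ is a normal subloop of index $2$, which never happens in the paper's tables, where $|L\setminus\mathcal{O}(L)|\le 2$. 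Replacing $f$ by $f+g$ shifts both components by $\delta_g$.
\end{itemize}

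What does not hold up is your remedy. The ambiguity does not ``move mass between $\mathcal{Z}^k$ and a $\phi$-stable complement.'' $\mathcal{B}^k_0$ is $\phi$-stable but meets $\mathcal{Z}^k$, so it is not a complement. A $\phi$-stable complement of $\mathcal{Z}^k$ in $\mathcal{Z}^k_\mathcal{L}$, let alone an $\mathrm{Aut}(L)$-stable one, is not guaranteed. Maschke's theorem is unavailable in characteristic $2$ once $\phi$ or $\mathrm{Aut}(L)$ has even order, and every automorphism group in the paper's tables does. For the same reason, drop the claim at the end of your compatibility paragraph that this section is equivariant.

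What your argument, and the paper's, genuinely proves is the invariant statement you retreat to:
\begin{itemize}
\item $\phi$ preserves $\mathcal{Z}^k_\mathcal{L}$, $\mathcal{Z}^k$ and $\mathcal{B}^k_0$;
\item it sends each decomposition $\psi=z+\delta_f$, with $z\in\mathcal{Z}^k$ and $f\in\mathcal{C}^{k-1}_0$, to a decomposition of the same form;
\item it makes \eqref{eq:sequence} a short exact sequence of $\mathrm{Aut}(L)$-modules.
\end{itemize}
If ``preserves the direct sum decomposition'' is read as the existence of an equivariant splitting, one more ingredient is required. For instance, an $\mathrm{Aut}(L)$-stable complement of $\mathcal{B}^k_0\cap\mathcal{Z}^k$ inside $\mathcal{B}^k_0$ would suffice, but neither proof supplies one.
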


\begin{proof} Let $\phi\in\mathrm{Aut}(L)$. In addition, let $\psi\in \mathcal{Z}_\mathcal{L}^k(L,\mathbb{Z}_2)$ and $f\in \mathcal{C}^{k-1}(L,\mathbb{Z}_2)$ be such that $\delta_\psi=\delta^2(f)$. Since $\delta_{\psi^\phi}=\delta^2(f^\phi)$, we have from Lemma \ref{lemma_aut} that the action \eqref{eq_action} preserves the group $\mathcal{Z}_\mathcal{L}^k(L,\mathbb{Z}_2)$. Concerning the direct sum decomposition, we claim that the canonical section induced by $\Phi$ in Corollary \ref{corollary_family_cochains} is equivariant, as we now show. This holds because $\Phi$ is equivariant. More precisely,
\[\Phi([\delta_f]^\phi) = \Phi([\delta_{f^\phi}]) = \delta^2(f^\phi) = (\delta^2(f))^\phi = (\Phi([\delta_f]))^\phi\]
and
\[\Phi^{-1}\delta(\psi^\phi)=\Phi^{-1}(\delta_{\psi^\phi}) = \Phi^{-1}((\delta_\psi)^\phi) = (\Phi^{-1}(\delta_\psi))^\phi=(\Phi^{-1}\delta(\psi))^\phi.\]
Since the natural inclusion $\iota$ is also equivariant, we have
\[\psi^\phi =\left((\psi+\delta_f)+\delta_f\right)^\phi=\left(\psi^\phi +(\delta_f)^\phi\right)+(\delta_f)^\phi=\left(\psi^\phi +\delta_{f^\phi}\right)+\delta_{f^\phi}.\]
Thus, the $\mathcal{Z}^k(L,\mathbb{Z}_2)$-component of $\psi^\phi$ is $\psi^\phi + \delta_{f^\phi}=(\psi+\delta_f)^\phi$, and the $\mathcal{B}_\mathcal{L}^k(L,\mathbb{Z}_2)$-component is $\Phi^{-1}\delta(\psi^\phi)=(\Phi^{-1}\delta(\psi))^\phi$. Hence $\phi$ maps each summand to the corresponding summand of $\psi^\phi$. It proves the equivariance of the decomposition.
\end{proof}

\begin{theorem}\label{theorem_aut} The orbits of $\mathcal{Z}_\mathcal{L}^k(L,\mathbb{Z}_2)/\mathrm{Aut}(L)$ are in bijection with pairs $([\psi_0], [\delta_f])$ where $[\psi_0]\in \mathcal{Z}^k(L,\mathbb{Z}_2)/\mathrm{Aut}(L)$ and $[\delta_f]\in \mathcal{B}_\mathcal{L}^k(L,\mathbb{Z}_2)/\mathrm{Stab}_{\mathrm{Aut}(L)}(\psi_0)$ for any representative $\psi_0\in [\psi_0]$, where $\mathrm{Stab}_{\mathrm{Aut}(L)}(\psi_0)$ is the stabilizer group of $\psi_0$ under the action of $\mathrm{Aut}(L)$. As a consequence,
\[\left|\mathcal{Z}_\mathcal{L}^k(L,\mathbb{Z}_2)/\mathrm{Aut}(L)\right|=\sum_{[\psi_0]\in \mathcal{Z}^k(L,\mathbb{Z}_2)/\mathrm{Aut}(L)} \left|\mathcal{B}_\mathcal{L}^k(L,\mathbb{Z}_2)/\mathrm{Stab}_{\mathrm{Aut}(L)}(\psi_0)\right|.\]
\end{theorem}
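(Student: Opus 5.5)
The plan is to apply the standard orbit-counting for a group acting diagonally on a direct sum: use the equivariant splitting of Proposition \ref{proposition_aut}, then fiber the orbit space over the first factor. By Theorem \ref{theorem_family_cochains_a} and Corollary \ref{corollary_family_cochains}, every $\psi\in\mathcal{Z}_\mathcal{L}^k(L,\mathbb{Z}_2)$ corresponds to a pair $(\psi_0,[\delta_f])\in \mathcal{Z}^k(L,\mathbb{Z}_2)\oplus\mathcal{B}_\mathcal{L}^k(L,\mathbb{Z}_2)$. Here $\psi_0=\psi+\delta_f$ and $[\delta_f]=\Phi^{-1}\delta(\psi)$. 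Proposition \ref{proposition_aut} gives $\psi^\phi\mapsto(\psi_0^\phi,[\delta_f]^\phi)$ for every $\phi\in\mathrm{Aut}(L)$, so the identification is an isomorphism of $\mathrm{Aut}(L)$-sets with the diagonal action on the direct sum. Lemma \ref{lemma_aut} guarantees that $\mathrm{Aut}(L)$ acts on $\mathcal{B}_\mathcal{L}^k(L,\mathbb{Z}_2)$. It therefore suffices to describe the orbits of a group $G=\mathrm{Aut}(L)$ acting diagonally on a product $X\times Y$ of $G$-sets, with $X=\mathcal{Z}^k$ and $Y=\mathcal{B}_\mathcal{L}^k$.

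\textbf{Construction of the map.} Project each orbit $G\cdot(x,y)$ to the orbit $G\cdot x$. Fix a representative $\psi_0$ of each class $[\psi_0]\in X/G$. Every orbit lying over $[\psi_0]$ contains an element of the form $(\psi_0,y)$, since we can translate the first coordinate to $\psi_0$. Two elements $(\psi_0,y)$ and $(\psi_0,y')$ lie in the same $G$-orbit if and only if $y'=y^\phi$ for some $\phi$ with $\psi_0^\phi=\psi_0$, that is, $\phi\in\mathrm{Stab}_{\mathrm{Aut}(L)}(\psi_0)$. This yields a bijection from the orbits over $[\psi_0]$ to $Y/\mathrm{Stab}_{\mathrm{Aut}(L)}(\psi_0)$, given by $G\cdot(\psi_0,y)\mapsto [y]$. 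Well-definedness and injectivity both follow from the stabilizer computation just described, and surjectivity is clear.

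\textbf{Independence of the representative.} If $\psi_0'=\psi_0^\sigma$, then $\mathrm{Stab}(\psi_0')=\sigma^{-1}\mathrm{Stab}(\psi_0)\sigma$ (or its conjugate, depending on the convention for the right action). The map $y\mapsto y^\sigma$ then carries $\mathrm{Stab}(\psi_0)$-orbits on $Y$ bijectively to $\mathrm{Stab}(\psi_0')$-orbits, compatibly with the bijection above. So the answer does not depend on the choice of $\psi_0\in[\psi_0]$, which is what the phrase ``for any representative'' in the statement requires. Summing the sizes of the fibers over all classes $[\psi_0]$ gives the counting formula.

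\textbf{Main obstacle.} The delicate step is the equivariance of the splitting, namely that the $\mathcal{Z}^k$-component of $\psi^\phi$ is exactly $\psi_0^\phi$. The decomposition in Corollary \ref{corollary_family_cochains} depends on the choice of $f$ with $\delta_\psi=\delta^2(f)$. I would resolve this by always choosing $f\in\mathcal{C}_0^{k-1}(L,\mathbb{Z}_2)$ as a fixed linear section of $\Phi$. If that section cannot be chosen equivariantly, I would instead work directly with the $\mathrm{Aut}(L)$-equivariant surjection $\Phi^{-1}\delta$ of Proposition \ref{proposition_f_cocycle}. Because the proof is written to rely only on the equivariance already asserted in Proposition \ref{proposition_aut}, the only remaining care is the left/right convention of the action $f^\phi$, which matters only for how the stabilizers are conjugated.
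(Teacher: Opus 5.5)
Your proposal follows the paper's proof and executes it more carefully. Like the paper, you transport the orbit problem through the equivariant splitting $\mathcal{Z}_\mathcal{L}^k(L,\mathbb{Z}_2)\cong\mathcal{Z}^k(L,\mathbb{Z}_2)\oplus\mathcal{B}_\mathcal{L}^k(L,\mathbb{Z}_2)$ of Proposition~\ref{proposition_aut} and fibre over $\mathcal{Z}^k(L,\mathbb{Z}_2)/\mathrm{Aut}(L)$; unlike the paper, you first move the $\mathcal{Z}^k$-component to the fixed $\psi_0$ before passing to $\mathrm{Stab}_{\mathrm{Aut}(L)}(\psi_0)$, and you check independence of the representative by conjugation, both of which the paper glosses over.

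Discard the fallback in your last paragraph, however. An equivariant surjection $\Phi^{-1}\delta$ with no equivariant section does not yield this count: for $\mathbb{Z}_2$ swapping the coordinates of $\mathbb{F}_2^2$, with the invariant line $\langle(1,1)\rangle$ in the role of $\mathcal{Z}^k$, there are $3$ orbits while the analogous sum gives $2+2=4$. Over $\mathbb{F}_2$ an equivariant section need not exist when $|\mathrm{Aut}(L)|$ is even, since Maschke's theorem fails. So your argument, exactly like the paper's, rests entirely on the equivariance asserted in Proposition~\ref{proposition_aut}, whose proof transports the choice $f\mapsto f^\phi$ rather than exhibiting a fixed equivariant section.
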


\begin{proof} Two loop-cocycles $\psi_1,\psi_2\in \mathcal{Z}_\mathcal{L}^k(L,\mathbb{Z}_2)$ are in the same orbit of $\mathcal{Z}_\mathcal{L}^k(L,\mathbb{Z}_2)/\mathrm{Aut}(L)$ if and only if there exists an automorphism $\phi\in\mathrm{Aut}(L)$ such that $\psi_1^\phi = \psi_2$. By the equivariance of the direct sum decomposition proved in Proposition~\ref{proposition_aut}, $\psi_1^\phi = \psi_2$ if and only if the $\mathcal{Z}^k(L,\mathbb{Z}_2)$-component of $\psi_1$ maps to that of $\psi_2$
under $\phi$, and simultaneously the $\mathcal{B}_\mathcal{L}^k(L,\mathbb{Z}_2)$-component of $\psi_1$ maps to that of $\psi_2$ under $\phi$. The first condition is equivalent to saying that the $\mathcal{Z}^k(L,\mathbb{Z}_2)$-components of $\psi_1$ and $\psi_2$ lie in a same orbit $[\psi_0]$ of $\mathcal{Z}^k(L,\mathbb{Z}_2)/\mathrm{Aut}(L)$. Then, the second condition is equivalent to saying that their $\mathcal{B}_\mathcal{L}^k(L,\mathbb{Z}_2)$-components lie in the same orbit of $\mathcal{B}_\mathcal{L}^k(L,\mathbb{Z}_2)/\mathrm{Stab}_{\mathrm{Aut}(L)}(\psi_0)$. The consequence follows straightforwardly.
\end{proof}

\section{Loop-cocyclic Hadamard matrices}\label{sec:hadamard}

As an illustrative example, this section focuses on the case $k=2$ for dealing with the computation of Hadamard matrices that can be developed over non-associative loops. The following definition introduces the notion of a loop-cocyclic Hadamard matrix.

\begin{definition}\label{def_L_cocyclic_Hadamard}
A Hadamard matrix $H$ is termed {\em loop-cocyclic over $L$} if there is a loop-cocycle $\psi\in \mathcal{Z}_\mathcal{L}^2(L,\mathbb{Z}_2)$ such that $H$ is Hadamard equivalent to the {\em loop-cocyclic matrix} $M_\psi:=\left(M_\psi[a,b]\right)_{a,b\in L}$, where $M_\psi[a,b]:=-1^{\psi(a,b)}$ for all $a,b\in L$.
\end{definition}

The classical notion of a cocyclic Hadamard matrix arises naturally when the loop $L$ is associative. Moreover, Proposition \ref{proposition_pseudocoboundary} implies that the pseudococyclic matrices described in \citep{Alvarez2020} are indeed loop-cocyclic matrices. Thus, both developments are included in our proposal. Furthermore, the following result shows that the necessary condition in the cocyclic Hadamard test is also valid for loop-cocyclic Hadamard matrices. It generalizes Theorem 29 in \citep{Alvarez2019}.

\begin{theorem}[Cocyclic Hadamard test over a loop]\label{theo_CocHadTest} A loop-cocycle $\psi\in \mathcal{Z}_{\mathcal{L}}^2(L,\mathbb{Z}_2)$ is a Hadamard matrix only if $\sum_{b\in  L} M_\psi[a,b]=0$ whenever $a\in L\setminus\{e\}$. If $\psi\in \mathcal{Z}^2(L,\mathbb{Z}_2)$, then the converse is also valid.
\end{theorem}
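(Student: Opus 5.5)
The plan is to show that the row of $M_\psi$ indexed by the identity $e$ is constant, and then to use orthogonality of rows. For the converse we appeal to the known test for genuine cocycles over loops.

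\textbf{Step 1: the $e$-row is constant.} Let $\psi\in\mathcal{Z}_{\mathcal{L}}^2(L,\mathbb{Z}_2)$, and by Definition~\ref{def_f_cocycle} choose $f\in\mathcal{C}^1(L,\mathbb{Z}_2)$ with $\delta_\psi=\delta^2(f)$. By Lemma~\ref{lemma_zeros}, this relation reads
\[
\psi(a,b)+\psi(ab,c)+\psi(a,bc)+\psi(b,c)+f((ab)c)+f(a(bc))=0
\]
for all $a,b,c\in L$. Put $a=b=e$. Then $(ee)c=e(ec)=c$, so the two $f$-terms cancel in $\mathbb{Z}_2$. We are left with $\psi(e,e)+\psi(e,c)+\psi(e,c)+\psi(e,c)=0$, that is, $\psi(e,c)=\psi(e,e)$ for every $c\in L$. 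Hence the row of $M_\psi$ indexed by $e$ is the constant vector $\varepsilon\mathbf{1}$, where $\varepsilon=(-1)^{\psi(e,e)}\in\{\pm1\}$.

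The associativity obstruction enters only through $f((ab)c)+f(a(bc))$. The one point to check carefully is that this term vanishes whenever one of the arguments is $e$. This holds because $e$ is a two-sided identity of the loop, even though $L$ need not be associative.

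\textbf{Step 2: necessity.} Suppose $M_\psi$ is Hadamard. Its rows are then pairwise orthogonal. Take $a\neq e$; the inner product of row $a$ with row $e$ is $\varepsilon\sum_{b\in L}M_\psi[a,b]=0$. Since $\varepsilon=\pm1$, this gives $\sum_{b\in L}M_\psi[a,b]=0$ for every $a\in L\setminus\{e\}$.

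\textbf{Step 3: converse for cocycles.} If $\psi\in\mathcal{Z}^2(L,\mathbb{Z}_2)$, then $\psi$ is a cocycle of $L$ in the sense of \citep{Alvarez2019}. The cocyclic Hadamard test holds in that setting by \citealp[Theorem~29]{Alvarez2019}, so vanishing of the non-constant row sums implies that $M_\psi$ is Hadamard.

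For a self-contained argument, one would adapt the group proof as follows. Compare rows $a$ and $a'$, and use the cocycle identity to write $\psi(a,b)+\psi(a',b)$ in terms of the row of some element $d$ with $a=a'd$ (which exists by unique left division), plus terms independent of $b$. Then reindex the sum over $b$ via the bijection $b\mapsto db$. This step is the main obstacle: the reindexing relies on the associativity-type manipulations used for cocycles, and the converse is asserted only for $\psi\in\mathcal{Z}^2$. The cited result already handles it, so I would simply invoke Theorem~29 of \citep{Alvarez2019}.
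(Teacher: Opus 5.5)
Your proof is correct. The necessity part is the paper's argument (orthogonality against the constant row indexed by $e$), but you do it more carefully. The paper simply invokes the ``normality'' of $M_\psi$. You instead derive $\psi(e,c)=\psi(e,e)$ from $\delta_\psi=\delta^2(f)$ at $a=b=e$ and carry the sign $\varepsilon=(-1)^{\psi(e,e)}$. That sign is needed, because a loop-cocycle need not be normalized; the paper's own matrix for \texttt{had.24.2} has an all-minus first row.

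For the converse you diverge: you cite Theorem~29 of \citep{Alvarez2019}, while the paper proves it directly. The citation is legitimate, since for $\psi\in\mathcal{Z}^2(L,\mathbb{Z}_2)$ that is precisely the statement. Your assessment of the direct route is off, however: there is no associativity obstacle. For rows $a\neq a'$, put $x=a/a'$, so $xa'=a$ and $x\neq e$. The cocycle identity at $(x,a',c)$ gives $\psi(a,c)+\psi(a',c)=\psi(x,a')+\psi(x,a'c)$. The map $c\mapsto a'c$ is a permutation of $L$ (a row of the Latin square), so the inner product of rows $a$ and $a'$ equals $(-1)^{\psi(x,a')}\sum_{d\in L}M_\psi[x,d]=0$.

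Your sketch also puts the division on the wrong side. With $a=a'd$, the identity at $(a',d,c)$ leaves both $\psi(a',dc)$ and $\psi(a',c)$ in the exponent and does not reduce to a single row sum. The correct direct argument is as short as the citation and would make your proof self-contained.
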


\begin{proof} The necessary condition follows readily from the normality of $M_{\psi}$ and the orthogonality of the rows in all Hadamard matrices. In order to prove the second statement, let $\psi\in \mathcal{Z}^2(L,A)$ and $a,b\in L$. In addition, let $/$ be the right division on the loop $L$. Then, we have from \eqref{HdLCoc}
\begin{align*}
\sum_{c\in L}M_\psi[a,c]M_\psi[b,c] & = \sum_{c\in L}M_\psi[a,c]M_\psi[a/b,b]M_\psi[(a/b)b,c]M_\psi[a/b,bc)]=\\
& = M_\psi[a/b,b]\sum_{c\in L}M_\psi[a/b,bc]=0.
\end{align*}
\end{proof}

The following example illustrates that the converse of Theorem \ref{theo_CocHadTest} does not hold in general for loop-cocycles. This fact was already described for pseudococycles in Example 4 in \citep{Alvarez2020}.

\begin{example}\label{example_subgroup} We consider the non-associative Moufang loop $L$ of order $24$ that is described by the following Latin square.
{\tiny \[\begin{array}{|c|c|c|c|c|c|c|c|c|c|c|c|c|c|c|c|c|c|c|c|c|c|c|c|}  \hline
1 & 2 & 3 & 4 & 5 & 6 & 7 & 8 & 9 & 10 & 11 & 12 & 13 & 14 & 15 & 16 & 17 & 18 & 19 & 20 & 21 & 22 & 23 & 24 \\ \hline
2 & 3 & 5 & 6 & 1 & 7 & 9 & 10 & 4 & 11 & 12 & 8 & 14 & 15 & 17 & 22 & 13 & 23 & 24 & 18 & 20 & 19 & 21 & 16 \\ \hline
3 & 5 & 1 & 7 & 2 & 9 & 4 & 11 & 6 & 12 & 8 & 10 & 15 & 17 & 13 & 19 & 14 & 21 & 16 & 23 & 18 & 24 & 20 & 22 \\ \hline
4 & 10 & 7 & 8 & 12 & 2 & 11 & 1 & 5 & 6 & 3 & 9 & 16 & 18 & 19 & 20 & 21 & 22 & 23 & 13 & 24 & 14 & 15 & 17 \\ \hline
5 & 1 & 2 & 9 & 3 & 4 & 6 & 12 & 7 & 8 & 10 & 11 & 17 & 13 & 14 & 24 & 15 & 20 & 22 & 21 & 23 & 16 & 18 & 19 \\ \hline
6 & 11 & 9 & 10 & 8 & 3 & 12 & 2 & 1 & 7 & 5 & 4 & 18 & 19 & 21 & 14 & 16 & 15 & 17 & 22 & 13 & 23 & 24 & 20 \\ \hline
7 & 12 & 4 & 11 & 10 & 5 & 8 & 3 & 2 & 9 & 1 & 6 & 19 & 21 & 16 & 23 & 18 & 24 & 20 & 15 & 22 & 17 & 13 & 14 \\ \hline
8 & 6 & 11 & 1 & 9 & 10 & 3 & 4 & 12 & 2 & 7 & 5 & 20 & 22 & 23 & 13 & 24 & 14 & 15 & 16 & 17 & 18 & 19 & 21 \\ \hline
9 & 8 & 6 & 12 & 11 & 1 & 10 & 5 & 3 & 4 & 2 & 7 & 21 & 16 & 18 & 17 & 19 & 13 & 14 & 24 & 15 & 20 & 22 & 23 \\ \hline
10 & 7 & 12 & 2 & 4 & 11 & 5 & 6 & 8 & 3 & 9 & 1 & 22 & 23 & 24 & 18 & 20 & 19 & 21 & 14 & 16 & 15 & 17 & 13 \\ \hline
11 & 9 & 8 & 3 & 6 & 12 & 1 & 7 & 10 & 5 & 4 & 2 & 23 & 24 & 20 & 15 & 22 & 17 & 13 & 19 & 14 & 21 & 16 & 18 \\ \hline
12 & 4 & 10 & 5 & 7 & 8 & 2 & 9 & 11 & 1 & 6 & 3 & 24 & 20 & 22 & 21 & 23 & 16 & 18 & 17 & 19 & 13 & 14 & 15 \\ \hline
13 & 17 & 15 & 20 & 14 & 21 & 23 & 16 & 18 & 24 & 19 & 22 & 1 & 5 & 3 & 8 & 2 & 9 & 11 & 4 & 6 & 12 & 7 & 10 \\ \hline
14 & 13 & 17 & 22 & 15 & 16 & 24 & 18 & 19 & 20 & 21 & 23 & 2 & 1 & 5 & 6 & 3 & 8 & 9 & 10 & 11 & 4 & 12 & 7 \\ \hline
15 & 14 & 13 & 23 & 17 & 18 & 20 & 19 & 21 & 22 & 16 & 24 & 3 & 2 & 1 & 11 & 5 & 6 & 8 & 7 & 9 & 10 & 4 & 12 \\ \hline
16 & 24 & 19 & 13 & 22 & 17 & 15 & 20 & 14 & 21 & 23 & 18 & 4 & 9 & 7 & 1 & 6 & 12 & 3 & 8 & 10 & 5 & 11 & 2 \\ \hline
17 & 15 & 14 & 24 & 13 & 19 & 22 & 21 & 16 & 23 & 18 & 20 & 5 & 3 & 2 & 9 & 1 & 11 & 6 & 12 & 8 & 7 & 10 & 4 \\ \hline
18 & 20 & 21 & 14 & 23 & 13 & 17 & 22 & 15 & 16 & 24 & 19 & 6 & 4 & 9 & 10 & 7 & 1 & 12 & 2 & 3 & 8 & 5 & 11 \\ \hline
19 & 22 & 16 & 15 & 24 & 14 & 13 & 23 & 17 & 18 & 20 & 21 & 7 & 6 & 4 & 3 & 9 & 10 & 1 & 11 & 12 & 2 & 8 & 5 \\ \hline
20 & 21 & 23 & 16 & 18 & 24 & 19 & 13 & 22 & 17 & 15 & 14 & 8 & 12 & 11 & 4 & 10 & 5 & 7 & 1 & 2 & 9 & 3 & 6 \\ \hline
21 & 23 & 18 & 17 & 20 & 15 & 14 & 24 & 13 & 19 & 22 & 16 & 9 & 7 & 6 & 12 & 4 & 3 & 10 & 5 & 1 & 11 & 2 & 8 \\ \hline
22 & 16 & 24 & 18 & 19 & 20 & 21 & 14 & 23 & 13 & 17 & 15 & 10 & 8 & 12 & 2 & 11 & 4 & 5 & 6 & 7 & 1 & 9 & 3 \\ \hline
23 & 18 & 20 & 19 & 21 & 22 & 16 & 15 & 24 & 14 & 13 & 17 & 11 & 10 & 8 & 7 & 12 & 2 & 4 & 3 & 5 & 6 & 1 & 9 \\ \hline
24 & 19 & 22 & 21 & 16 & 23 & 18 & 17 & 20 & 15 & 14 & 13 & 12 & 11 & 10 & 5 & 8 & 7 & 2 & 9 & 4 & 3 & 6 & 1 \\ \hline
\end{array}\]}

We also consider the following two matrices, with entries $1$ and $-1$ that are represented, respectively, by the signs $+$ and $-$.

\[M_{\psi_1}:={\fontsize{4}{5}\selectfont
\left(\begin{array}{cccccccccccccccccccccccc}
+ & + & + & + & + & + & + & + & + & + & + & + & + & + & + & + & + & + & + & + & + & + & + & + \\
+ & + & + & - & - & + & - & - & - & - & - & + & + & - & + & + & + & + & + & - & - & + & - & - \\
+ & + & - & - & - & - & + & + & + & + & - & - & - & - & + & + & + & - & - & - & + & + & + & - \\
+ & - & - & - & - & - & + & + & + & - & + & + & - & + & + & - & + & + & + & - & - & - & - & + \\
+ & - & - & + & - & + & - & - & + & + & + & + & - & - & + & + & - & + & - & + & + & - & - & - \\
+ & - & - & - & + & + & + & - & + & + & - & + & - & + & - & + & + & + & - & + & - & - & - & - \\
+ & + & + & + & - & + & + & - & + & - & + & - & + & + & + & - & - & - & - & - & - & - & + & - \\
+ & - & + & + & + & - & - & - & + & - & + & - & - & - & - & - & + & + & + & - & + & + & + & - \\
+ & - & + & + & - & + & + & + & - & - & - & + & - & + & + & + & - & - & + & - & - & + & - & - \\
+ & + & + & - & + & - & + & - & - & + & + & - & + & - & - & - & + & - & + & + & - & - & + & - \\
+ & + & - & + & + & - & + & + & - & - & + & + & + & - & - & + & - & - & + & - & + & - & - & - \\
+ & + & - & - & - & + & - & + & - & - & + & - & + & - & + & + & - & + & + & - & - & - & + & + \\
+ & + & + & - & - & - & + & - & - & - & + & - & - & + & - & + & - & + & - & + & + & + & - & + \\
+ & + & + & - & + & + & - & + & + & + & + & + & - & - & - & - & - & - & - & - & - & + & - & + \\
+ & - & - & - & - & + & - & + & - & - & + & + & + & + & - & - & + & - & - & + & + & + & + & - \\
+ & - & - & - & - & + & + & - & + & - & - & + & + & - & - & - & - & - & + & + & + & + & + & + \\
+ & + & - & + & - & - & - & + & + & + & - & - & + & + & - & - & - & + & + & + & - & + & - & - \\
+ & - & + & + & - & - & - & + & + & - & - & + & + & - & - & + & + & - & - & + & - & - & + & + \\
+ & + & + & + & - & + & + & + & - & - & - & - & - & - & - & - & + & + & - & + & + & - & - & + \\
+ & - & + & - & + & - & - & - & + & + & - & - & + & + & + & + & - & - & + & - & + & - & - & + \\
+ & - & - & + & + & - & + & - & - & + & - & + & + & - & + & - & - & + & - & - & - & + & + & + \\
+ & + & - & + & - & + & - & - & - & + & - & + & - & + & - & - & + & - & + & - & + & - & + & + \\
+ & + & - & + & + & - & - & - & - & - & + & - & - & + & + & + & + & - & - & + & - & + & - & + \\
+ & + & + & - & + & - & - & + & - & - & - & + & - & + & + & - & - & + & - & + & + & - & + & -
\end{array}\right)}
\]
\[M_{\psi_2}:={\fontsize{4}{5}\selectfont
\left( \begin{array}{cccccccccccccccccccccccc}
+ & + & + & + & + & + & + & + & + & + & + & + & + & + & + & + & + & + & + & + & + & + & + & + \\
+ & + & + & - & - & + & + & + & + & - & - & - & + & - & + & - & + & - & + & - & - & - & + & - \\
+ & + & - & - & - & + & + & - & - & + & + & - & - & - & + & + & + & - & - & + & + & - & - & + \\
+ & + & - & - & + & - & - & + & - & + & - & + & - & + & + & - & + & - & - & - & - & + & + & + \\
+ & - & - & - & - & + & - & + & - & - & + & + & - & - & + & + & - & + & + & + & - & + & + & - \\
+ & + & + & + & + & + & - & - & - & - & + & - & - & + & - & + & + & + & - & - & - & - & + & - \\
+ & - & + & - & + & + & + & - & - & - & - & + & + & + & + & + & - & - & - & - & + & + & - & - \\
+ & - & - & + & - & + & - & - & + & + & - & + & - & + & + & - & + & + & + & - & + & - & - & - \\
+ & + & - & + & - & - & + & - & - & - & - & + & + & - & - & - & + & + & - & + & + & + & + & - \\
+ & - & + & + & - & - & - & + & - & + & - & - & + & - & + & + & - & + & - & - & + & - & + & + \\
+ & + & + & - & - & - & - & - & + & - & + & + & + & + & + & - & - & + & - & + & - & - & - & + \\
+ & - & - & + & + & - & + & + & + & - & + & - & - & + & + & - & - & - & - & + & + & - & + & - \\
+ & + & + & - & - & - & - & - & + & + & - & - & - & + & - & + & - & - & + & + & + & + & + & - \\
+ & + & + & + & + & + & - & + & - & - & - & + & - & - & - & - & - & - & + & + & + & - & - & + \\
+ & - & - & + & - & + & - & + & + & - & - & - & + & + & - & + & + & - & - & + & - & + & - & + \\
+ & - & - & - & + & + & + & - & - & + & - & - & + & + & - & - & - & + & + & + & - & - & + & + \\
+ & + & - & + & - & - & + & + & - & + & + & + & + & + & - & + & - & - & + & - & - & - & - & - \\
+ & - & + & + & + & - & - & - & - & + & + & - & + & - & + & - & + & - & + & + & - & + & - & - \\
+ & - & + & + & - & + & + & - & + & + & + & + & - & - & - & - & - & - & - & - & - & + & + & + \\
+ & - & - & - & + & - & - & - & + & - & + & + & + & - & - & + & + & - & + & - & + & - & + & + \\
+ & + & - & + & + & - & + & - & + & - & - & - & - & - & + & + & - & + & + & - & - & + & - & + \\
+ & - & + & - & + & - & + & + & + & + & - & + & - & - & - & + & + & + & - & + & - & - & - & - \\
+ & - & + & - & - & - & + & + & - & - & + & - & - & + & - & - & + & + & + & - & + & + & - & + \\
+ & + & - & - & + & + & - & + & + & + & + & - & + & - & - & - & - & + & - & - & + & + & - & -
\end{array} \right)}\]

They describe two loop-cocycles $\psi_1,\,\psi_2\in \mathcal{Z}_\mathcal{L}^2(L,\mathbb{Z}_2)$. In particular, if we define for each element $h\in L$, the cochain $\partial_h\in \mathcal{C}^1(L,\mathbb{Z}_2)$ so that $\partial_h(s)=0$ if $h\neq s$, and $\partial_h(h)=1$, then $\delta_{\psi_1}=\delta^2(f_1)$ and $\delta_{\psi_2}=\delta^2(f_2)$, where
\[f_1=\partial_2\,\partial_3\,\partial_6\,\partial_9\,\partial_{10}\,\partial_{15}\,\partial_{17}\,\partial_{19}\,\partial_{20}\,\partial_{21}\hspace{0.4cm} \text{and}\hspace{0.4cm}f_2=\partial_7\,\partial_9\,\partial_{13}\,\partial_{14}\,\partial_{15}\,\partial_{16}\,\partial_{17}\,\partial_{18}\,\partial_{19}\,\partial_{20}.\]
Both matrices $M_{\psi_1}$ and $M_{\psi_2}$ satisfy the cocyclic Hadamard test. However, $M_{\psi_1}$ is not Hadamard, while $M_{\psi_2}$ is Hadamard. More specifically, $M_{\psi_2}$ is equivalent to the Hadamard matrix \texttt{had.24.21} in the Sloane's library of Hadamard matrices \citep{Sloane}, which is known not to be cocyclic over any group \citep{OC11}. \hfill $\lhd$
\end{example}

For the first time, the loop-cocyclic Hadamard matrix $M_{\psi_2}$ in Example \ref{example_subgroup} gives an affirmative answer to Problem \ref{problem3a} outside the pseudococyclic framework. It corroborates the relevance that Moufang loops have to expand the classical cocyclic development of Hadamard matrices. In order to delve into this milestone, we have made use of Algorithm \ref{alg:HM} to compute all the Hadamard equivalence classes of loop-cocyclic Hadamard matrices over Moufang loops of order $24$. We have focused on this order because it is the smallest one in which there are Hadamard equivalence classes that are not cocyclic over groups. (Only 16 over 60 classes are cocyclic over groups \citealp{OC11}.)

\begin{algorithm}[H]
\caption{Enumeration of loop-cocyclic Hadamard matrices}\label{alg:HM}
\begin{algorithmic}[1]
\Require A loop $L$.
\Ensure Distribution of $\mathcal{Z}_\mathcal{L}^2(L,\mathbb{Z}_2)$ into Hadamard equivalence classes.

\State Compute the associative obstruction set $\mathcal{O}(L)$.
\State Compute the vector space of cocycles $\mathcal{Z}^2(L, \mathbb{Z}_2)$.
\State Compute the quotient vector space $\mathcal{B}_\mathcal{L}^k(L,\mathbb{Z}_2)$.
\State Compute the automorphism group $\mathrm{Aut}(L)$.
    \ForAll{$\phi \in \mathcal{Z}^2(L, \mathbb{Z}_2)/\mathrm{Aut}(L)$}
        \State Compute the stabilizer $Stab_{\mathrm{Aut}(L)}(\phi)$.
        \ForAll{$\psi \in \mathcal{B}_\mathcal{L}^k(L,\mathbb{Z}_2)/Stab_{\mathrm{Aut}(L)}(\phi)$}
            \If{$\phi + \psi$ is Hadamard}
                \State Store $\phi + \psi$ (if not equivalent to a previously found one).
            \EndIf
        \EndFor
    \EndFor
\end{algorithmic}
\end{algorithm}

Our computational analysis has made use of the package {\sc loops} \citep{Vojtechovsky2024} in {\sc GAP}, which allows for the systematic identification and retrieval from the built-in library of the five non-associative Moufang loops of order 24. Thus, for instance, the Moufang loop described in Example \ref{example_subgroup} is labeled as \texttt{MoufangLoop(24,3)} in the package {\sc loops}. For each algebraic structure $L$, Table \ref{tab:spaces} shows the size of its associative obstruction set (column $\mathcal{O}$), the size of its automorphism group (column $\mathrm{Aut}$), the dimension of the coboundary vector space (column $\mathcal{B}^2$), the dimension of the cocycle vector space (column $\mathcal{Z}^2$), the dimension of the quotient group $\mathcal{B}^2_\mathcal{L}(L,\mathbb{Z}_2)$ (column $\mathcal{B}^2_\mathcal{L}$), the size of the quotient group $\mathcal{Z}^2(L,\mathbb{Z}_2)/\mathrm{Aut}(L)$ (column $\mathcal{Z}^2/\mathrm{Aut}$), and the Hadamard equivalence classes (column $\mathrm{Had}$), which are again labeled according to the Sloane's library of Hadamard matrices \citep{Sloane}.

\begin{table}[ht]
\centering
\caption{Structural analysis of loop-cocycle vector spaces for all non-associative Moufang loops of order 24.}
\label{tab:spaces}
\begin{tabular}{@{}lcccccrl}
\toprule
\textbf{ID GAP} &  \textbf{$\mathcal{O}$} & \textbf{$\mathrm{Aut}$} & \textbf{$\mathcal{B}^2$} & \textbf{$\mathcal{Z}^2$}  & \textbf{$\mathcal{B}^2_\mathcal{L}$} &  \textbf{$\mathcal{Z}^2/\mathrm{Aut}$} & \textbf{$\mathrm{Had}$} (\textbf{ID \citep{Sloane}}) \\ \midrule
\texttt{MoufangLoop(24,1)} & 22 & 432 & 21 & 13 & 14 &608 & \texttt{had.24.1}$^*$\\
 &  &  &   &  &  & & \texttt{had.24.25}\\
 &  &  &   &  &  & & \texttt{had.24.27}$^*$\\
\texttt{MoufangLoop(24,2)} & 23  & 288 & 23 & 7 & 17 &48 & -\\
\texttt{MoufangLoop(24,3)} & 22 & 144 &  22 & 11 & 14 &512 & \texttt{had.24.21}\\
 &  &  &   &  &  & & \texttt{had.24.38}\\
\texttt{MoufangLoop(24,4)} & 22 & 144 &  22 & 11 & 14 &544 & -\\
\texttt{MoufangLoop(24,5)} & 22 & 432 & 22 & 10 & 14 &192 & -\\ \bottomrule
\multicolumn{8}{l}{\footnotesize $^*$ Also cocyclic over groups.}
\end{tabular}
\end{table}

Only two of the five Moufang loops of order 24 give rise to loop-cocyclic Hadamard matrices. More precisely, five Hadamard equivalence classes appear, of which two are indeed cocyclic over groups. Example \ref{example_subgroup} shows one of the remaining three classes. Similarly, the other two classes are described by the following loop-cocyclic Hadamard matrices over the corresponding Moufang loops described in Table \ref{tab:spaces}.

\[\texttt{had.24.25}\equiv {\fontsize{4}{5}\selectfont \left( \begin{array}{cccccccccccccccccccccccc}
 +& +& +& +& +& +& +& +& +& +& +& +& +& +& +& +& +& +& +& +& +& +& +& +\\
 +& -& +& -& -& +& +& +& -& -& -& +& +& +& -& +& -& -& +& -& +& -& +& -\\
 +& -& -& -& +& -& +& -& +& -& +& +& -& +& +& -& -& +& +& -& -& -& +& +\\
 +& +& -& +& +& -& +& +& -& -& -& -& +& +& +& -& +& -& -& -& +& -& -& +\\
 +& +& -& -& -& +& -& +& +& -& +& -& +& -& +& -& -& +& +& -& +& +& -& -\\
 +& +& +& -& -& -& +& -& -& +& +& -& -& -& +& -& +& -& +& +& +& -& +& -\\
 +& +& +& +& -& -& -& -& +& -& -& +& +& -& -& -& +& -& +& -& -& +& +& +\\
 +& -& -& +& -& -& -& +& -& +& +& +& -& -& -& +& +& +& +& -& +& -& -& +\\
 +& +& -& -& +& +& -& -& -& +& -& +& -& +& -& -& +& +& -& -& +& +& +& -\\
 +& -& +& +& +& +& -& -& -& -& +& -& +& -& +& +& +& +& -& -& -& -& +& -\\
 +& -& +& -& +& -& -& +& +& +& -& -& -& -& +& +& -& -& -& -& +& +& +& +\\
 +& -& -& +& -& +& +& -& +& +& -& -& -& +& +& +& +& -& +& -& -& +& -& -\\
 +& -& +& -& +& +& +& +& -& -& -& -& -& -& -& -& +& +& +& +& -& +& -& +\\
 +& +& +& -& -& -& -& +& +& -& -& +& -& +& +& +& +& +& -& +& -& -& -& -\\
 +& -& -& -& -& +& -& +& +& +& +& -& +& +& -& -& +& -& -& +& -& -& +& +\\
 +& -& +& +& -& +& -& -& -& -& +& +& -& +& +& -& -& -& -& +& +& +& -& +\\
 +& +& -& +& +& -& -& +& -& -& +& -& -& +& -& +& -& -& +& +& -& +& +& -\\
 +& -& +& +& +& -& -& -& +& +& -& -& +& +& -& -& -& +& +& +& +& -& -& -\\
 +& +& -& +& -& +& +& -& +& -& -& -& -& -& -& +& -& +& -& +& +& -& +& +\\
 +& +& +& -& -& -& +& -& -& +& +& -& +& +& -& +& -& +& -& -& -& +& -& +\\
 +& -& -& +& -& -& +& +& -& +& -& +& +& -& +& -& -& +& -& +& -& +& +& -\\
 +& -& -& -& +& -& +& -& +& -& +& +& +& -& -& +& +& -& -& +& +& +& -& -\\
 +& +& -& -& +& +& -& -& -& +& -& +& +& -& +& +& -& -& +& +& -& -& -& +\\
 +& +& +& +& +& +& +& +& +& +& +& +& -& -& -& -& -& -& -& -& -& -& -& -
  \end{array}
  \right)}\]
  \[\texttt{had.24.38}\equiv {\fontsize{4}{5}\selectfont \left( \begin{array}{cccccccccccccccccccccccc}
  +&+&+&+&+&+&+&+&+&+&+&+&+&+&+&+&+&+&+&+&+&+&+&+\\
 +&+&+&-&-&+&+&+&+&-&-&-&+&-&+&-&+&-&-&-&-&+&+&-\\
 +&+&-&-&-&+&+&-&-&+&+&-&-&-&+&-&+&-&+&+&+&-&-&+\\
 +&+&-&-&+&-&-&+&-&+&-&+&-&+&-&-&+&-&+&-&-&+&+&+\\
 +&-&-&-&-&+&-&+&-&-&+&+&-&-&+&+&-&+&-&+&-&+&+&+\\
 +&+&+&+&+&+&-&-&-&-&+&-&-&-&-&+&+&+&+&-&-&-&+&-\\
 +&-&+&-&+&+&+&-&-&-&-&+&-&+&+&+&-&-&+&-&+&+&-&-\\
 +&-&-&+&-&+&-&-&+&+&-&+&-&+&+&-&+&+&-&-&+&-&+&-\\
 +&+&-&+&-&-&+&-&-&-&-&+&+&-&-&-&-&+&+&+&+&+&+&-\\
 +&-&+&+&-&-&-&+&-&+&-&-&+&-&+&+&-&-&+&-&+&-&+&+\\
 +&+&+&-&-&-&-&-&+&-&+&+&+&+&+&-&-&+&+&-&-&-&-&+\\
 +&-&-&+&+&-&+&+&+&-&+&-&-&+&+&-&-&-&+&+&-&-&+&-\\
 +&+&+&-&-&-&-&-&+&+&+&-&-&+&-&+&-&-&-&+&+&+&+&-\\
 +&+&+&+&+&+&-&+&+&-&-&+&-&-&-&-&-&-&-&+&+&-&-&+\\
 +&-&-&+&-&+&-&-&+&-&-&-&+&+&-&+&+&-&+&+&-&+&-&+\\
 +&-&+&-&+&+&+&-&-&+&-&-&+&+&-&-&-&+&-&+&-&-&+&+\\
 +&+&-&+&-&+&+&+&-&+&+&+&+&+&-&+&-&-&-&-&-&-&-&-\\
 +&-&+&+&+&-&-&-&-&+&+&+&+&-&+&-&+&-&-&+&-&+&-&-\\
 +&+&-&-&+&-&-&+&-&-&-&-&+&+&+&+&+&+&-&+&+&-&-&-\\
 +&-&-&-&+&-&+&-&+&-&+&+&+&-&-&+&+&-&-&-&+&-&+&+\\
 +&+&-&+&+&-&+&-&+&+&-&-&-&-&+&+&-&+&-&-&-&+&-&+\\
 +&-&+&-&-&-&+&+&+&+&-&+&-&-&-&+&+&+&+&+&-&-&-&-\\
 +&-&+&+&-&-&+&+&-&-&+&-&-&+&-&-&+&+&-&-&+&+&-&+\\
 +&-&-&-&+&+&-&+&+&+&+&-&+&-&-&-&-&+&+&-&+&+&-&-
  \end{array}
  \right)}\]

Their coboundaries coincide, respectively, with the coboundaries of
\[\partial_3\,\partial_5\,\partial_7\,\partial_9\,\partial_{11}\,\partial_{14}\,\partial_{15}\,\partial_{20}\,\partial_{21} \hspace{1cm} \text{and} \hspace{1cm} \partial_{10}\,\partial_{12}\,\partial_{13}\,\partial_{14}\,\partial_{15}\,\partial_{16}\,\partial_{17}\,\partial_{19}\,\partial_{20}.\]

Note that the five non-associative Moufang loops exhibit a structural collapse of their loop-cocycle vector spaces in comparison with groups (see Table \ref{tab:spaces_2}). This is not the case for their coboundary vector spaces, whose dimensions vary similarly in both cases. Note also that while in the associative theory every coboundary is a cocycle, Table \ref{tab:spaces} shows that $\dim(\mathcal{B}^k(L,\mathbb{Z}_2))>\dim(\mathcal{Z}^k(L,\mathbb{Z}_2))$ for every non-associative Moufang loop of order 24. In particular, the vector space of cocycles collapses to dimension 7 in \texttt{MoufangLoop(24,2)}, failing completely to cover the 23-dimensional space of coboundaries.

\begin{table}[ht]
\centering
\caption{Structural analysis of cocycle vector spaces for groups of order 24.}
\label{tab:spaces_2}
\begin{tabular}{@{}lcccr}
\toprule
\textbf{ID GAP} &  \textbf{$\mathrm{Aut}$} & \textbf{$\mathcal{B}^2$} & \textbf{$\mathcal{Z}^2$}  & \textbf{$\mathcal{Z}^2/\mathrm{Aut}$} \\ \midrule
\texttt{AllSmallGroups(24)[1]} & 24 & 23 & 24& 732,160\\
\texttt{AllSmallGroups(24)[2]} & 8 & 23 & 24 & 2,150,400\\
\texttt{AllSmallGroups(24)[3]} & 24 & 24 & 24 &704,640\\
\texttt{AllSmallGroups(24)[4]} & 48 & 22 & 24 &402,176\\
\texttt{AllSmallGroups(24)[5]} & 24 & 22 & 25 &1,507,328\\
\texttt{AllSmallGroups(24)[6]} & 48 & 22& 25& 758,528\\
\texttt{AllSmallGroups(24)[7]} & 48 & 22& 25& 768,768\\
\texttt{AllSmallGroups(24)[8]} & 24 & 22& 25& 1,507,328\\
\texttt{AllSmallGroups(24)[9]} & 16 & 22& 25& 2,245,888\\
\texttt{AllSmallGroups(24)[10]} & 16 & 22& 25& 2,219,264\\
\texttt{AllSmallGroups(24)[11]} & 48 & 22& 24& 395,904\\
\texttt{AllSmallGroups(24)[12]} & 24 & 23& 25& 1,411,328\\
\texttt{AllSmallGroups(24)[13]} & 24 & 23& 25& 1,413,376\\
\texttt{AllSmallGroups(24)[14]} & 144 & 21& 27&992,768\\
\texttt{AllSmallGroups(24)[15]} & 336 & 21& 27&438,576\\
\bottomrule
\end{tabular}
\end{table}

To understand the exceptional nature of Moufang loops, we have also calculated the dimensions of both the coboundary and cocycle vector spaces for 10,000 random loops of order $24$. We obtained $\dim(\mathcal{B}^k(L,\mathbb{Z}_2))=|\mathcal{O}(L)|=24$, and $\dim(\mathcal{Z}^k(L,\mathbb{Z}_2))=1$ for all of them. This describes a base state of entropy, so any deviation in the dimension of the associative kernel in structured loops is not just a numerical difference, but a topological signal of underlying symmetry. In order to delve into this aspect, we have also computed the loop-cocyclic Hadamard matrices over all the non-associative non-Moufang right Bol loops of order $24$ in the Moorhouse's library of Bol loops \citep{Moorhouse}. Recall here that our loop $L$ is a {\em right Bol loop} if $((ab)c)b=a((bc)b)$ for all $a,b,c\in L$. It is a {\em left Bol loop} if $(a(ba))c=a(b(ac))$ for all $a,b,c\in L$. A loop is Moufang if and only if it is both a left and a right Bol loop. Table \ref{tab:spaces_3} shows our results. We highlight the existence of a new Hadamard equivalence class that is not cocyclic over any group. It is described by the following loop-cocyclic Hadamard matrix over the right Bol loop labeled as \texttt{24.15.4.1} in \citep{Moorhouse}.

 \[\texttt{had.24.2}\equiv {\fontsize{4}{5}\selectfont \left( \begin{array}{cccccccccccccccccccccccc}
-&-&-&-&-&-&-&-&-&-&-&-&-&-&-&-&-&-&-&-&-&-&-&-\\
-&+&-&-&+&+&-&+&+&-&-&+&-&+&-&+&-&+&+&-&+&-&-&+\\
-&-&-&+&-&+&-&+&-&+&+&+&+&+&+&-&-&+&+&-&-&-&+&-\\
-&-&+&+&+&-&-&-&+&+&-&+&-&+&+&+&-&-&-&+&-&-&+&+\\
-&+&-&+&-&-&+&+&+&+&-&-&-&-&+&+&+&+&-&-&+&-&+&-\\
-&+&+&-&-&+&+&-&-&+&-&+&-&+&+&-&+&-&+&+&+&-&-&-\\
-&-&-&-&+&+&-&-&-&+&+&-&-&+&-&+&+&+&-&+&+&+&+&-\\
-&+&+&-&+&-&-&+&-&-&-&-&+&+&+&+&+&-&+&-&-&+&+&-\\
-&-&+&-&-&-&-&-&+&-&+&+&-&-&+&-&+&+&+&-&+&+&+&+\\
-&-&+&+&+&+&+&+&-&-&+&+&-&-&+&+&-&-&-&-&+&+&-&-\\
-&+&-&+&+&-&+&-&+&-&+&-&-&+&+&-&-&+&+&+&-&+&-&-\\
-&+&+&+&-&+&-&+&+&+&+&-&-&+&-&-&+&-&-&-&-&+&-&+\\
-&+&+&+&-&+&-&-&+&-&+&-&+&-&-&+&-&-&+&+&+&-&+&-\\
-&+&-&+&+&-&+&-&-&-&+&+&+&+&-&-&+&-&-&-&+&-&+&+\\
-&+&+&-&+&-&-&+&-&+&+&-&+&-&+&-&-&+&-&+&+&-&-&+\\
-&-&-&-&+&+&+&-&+&+&+&-&+&-&+&+&+&-&+&-&-&-&-&+\\
-&-&+&-&-&-&+&+&+&-&+&+&+&+&-&+&+&+&-&+&-&-&-&-\\
-&-&+&+&+&+&+&+&-&-&-&-&-&-&-&-&+&+&+&+&-&-&+&+\\
-&-&-&+&+&-&-&+&+&+&-&+&+&-&-&-&+&-&+&+&+&+&-&-\\
-&+&-&-&-&-&+&+&-&+&+&+&-&-&-&+&-&-&+&+&-&+&+&+\\
-&-&+&+&-&-&+&-&-&+&-&-&+&+&-&+&-&+&+&-&+&+&-&+\\
-&-&-&-&-&+&+&+&+&-&-&-&+&+&+&-&-&-&-&+&+&+&+&+\\
-&+&+&-&+&+&+&-&+&+&-&+&+&-&-&-&-&+&-&-&-&+&+&-\\
-&+&-&+&-&+&-&-&-&-&-&+&+&-&+&+&+&+&-&+&-&+&-&+
  \end{array}
  \right)}\]

The coboundary of the associated loop-cocycle coincides with the coboundary of
\[\partial_2\,\partial_6\,\partial_8\,\partial_{10}\,\partial_{13}\,\partial_{15}\,\partial_{16}\,\partial_{19}\,\partial_{21}.\]

\begin{table}[ht]
\centering
\caption{Structural analysis of loop-cocycle vector spaces for non-associative non-Moufang right Bol loops of order 24.}
\label{tab:spaces_3}
\begin{tabular}{@{}lcccccrl}
\toprule
\textbf{ID \citep{Moorhouse}} &  \textbf{$\mathcal{O}$} & \textbf{$\mathrm{Aut}$} & \textbf{$\mathcal{B}^2$} & \textbf{$\mathcal{Z}^2$}  & \textbf{$\mathcal{B}^2_\mathcal{L}$} &  \textbf{$\mathcal{Z}^2/\mathrm{Aut}$} & \textbf{$\mathrm{Had}$} (\textbf{ID \citep{Sloane}}) \\ \midrule
\texttt{24.1.12.0} & 24 & 16 & 22 & 12 & 12 & 2,336 & -\\
\texttt{24.3.6.1} & 24 & 16 & 22 & 13  & 12 & 2,336 & -\\
\texttt{24.3.6.2} & 24 & 8 & 22 & 13 & 12 & 4,352 & -\\
\texttt{24.5.12.1} & 24 & 16 & 22 & 13  & 12 & 2,336& -\\
\texttt{24.5.12.0} & 24 & 8 & 22 & 13 & 12 & 4,352 & -\\
\texttt{24.7.6.1} & 24 & 16 & 22 & 13 & 12 & 2,336 & -\\
\texttt{24.9.4.0} & 24 & 8 & 22 & 5 & 20 & 32 & -\\
\texttt{24.7.2.0} & 24 & 16 & 22 & 5 & 20 & 20 & -\\
\texttt{24.9.7.0} & 24 & 48 & 21 & 11 & 16 & 152 & \texttt{had.24.1}$^*$\\
\texttt{24.11.6.0} & 24 & 16 & 21 & 11 & 16 & 344 & \texttt{had.24.1}$^*$\\
\texttt{24.13.5.0} & 24 & 48 & 21 & 11 & 16 & 152 & \texttt{had.24.1}$^*$\\
\texttt{24.15.4.1} & 24 & 12 & 21 & 11 & 16 & 416 & \texttt{had.24.1}$^*$\\
 &  &  &   &  &  & & \texttt{had.24.2}\\
\texttt{24.17.3.0} & 24 & 16 & 21 & 11 & 16 & 344 & \texttt{had.24.1}$^*$\\
\texttt{24.19.2.0} & 24 & 48 & 21 & 11 & 16 & 152 & \texttt{had.24.1}$^*$\\
\texttt{24.21.1.0} & 24 & 336 & 21 & 11 & 16 & 48 & \texttt{had.24.1}$^*$\\
\texttt{24.9.1.0} & 24 & 16 & 22 & 5 & 20 & 20 & -\\
\texttt{24.11.2.0} & 24 & 8 & 22 & 5 & 20  & 32 & -\\
\texttt{24.13.1.0} & 24 & 8 & 22 & 5 & 20 & 32 & -\\
\texttt{24.17.1.0} & 24 & 8 & 22 & 5 & 20 & 32 & -\\
\texttt{24.19.2.1} & 24 & 16 & 22 & 5 & 20 & 20 & -\\
\texttt{24.21.1.1} & 24 & 16 & 22 & 5 & 20 & 20 & -\\ \bottomrule
\multicolumn{8}{l}{\footnotesize $^*$ Also cocyclic over groups.}
\end{tabular}
\end{table}

We finish our study with some computational results related to the order $28$. First, in order to complement the study of loop-cocyclic Hadamard matrices presented in Example \ref{example_GS28}, we have implemented our approach to determine all loop-cocyclic Hadamard equivalence classes over the unique Moufang loop of order $28$. In addition, we have also computed all loop-cocyclic Hadamard equivalence classes over the two non-Moufang right Bol loops of order $28$ in the Moorhouse's library of Bol loops \citep{Moorhouse}. Table \ref{tab:spaces_4} shows our results.

\begin{table}[ht]
\centering
\caption{Structural analysis of loop-cocycle vector spaces for the unique Moufang loop of order 28, and the two unique non-associative non-Moufang right Bol loops of order 28.}
\label{tab:spaces_4}
\begin{tabular}{@{}lccccccl}
\toprule
\textbf{ID GAP / \citep{Moorhouse}} &  \textbf{$\mathcal{O}$} & \textbf{$\mathrm{Aut}$} & \textbf{$\mathcal{B}^2$} & \textbf{$\mathcal{Z}^2$}  & \textbf{$\mathcal{B}^2_\mathcal{L}$} &  \textbf{$\mathcal{Z}^2/\mathrm{Aut}$} & \textbf{$\mathrm{Had}$} (\textbf{ID \citep{Sloane}}) \\ \midrule
\texttt{MoufangLoop(28,1)} & 27 & 1,764 & 26 & 7 & 21 & 24 & \texttt{had.28.2}\\
&  &  &  &  &  &  &  \texttt{had.28.6}\\
&  &  &  &  &  &  & \texttt{had.28.18}\\
&  &  &  &  &  &  & \texttt{had.28.375}\\
&  &  &  &  &  &  & \texttt{had.28.410}\\
\texttt{28.9.3.0} & 28 & 12 & 26 & 5 & 24 & 20 & \texttt{had.28.376}\\
&  &  &  &  &  &  & \texttt{had.28.481}\\
&  &  &  &  &  &  & \texttt{had.28.484}\\
\texttt{28.21.1.1} & 28 & 36 & 26 & 5 & 24 & 12 & \texttt{had.28.376}\\
&  &  &  &  &  &  & \texttt{had.28.481}\\
 &  &  &   &   &  &  & \texttt{had.28.484}\\
\bottomrule
\multicolumn{8}{l}{\footnotesize $^*$ Also cocyclic over groups.}
\end{tabular}
\end{table}

In addition to the four Hadamard equivalence classes described in Example \ref{example_GS28}, we have found a new class that is not cocyclic over any group, but it is loop-cocyclic over the unique Moufang-loop of order $28$. It is described by the following loop-cocyclic matrix.
\[\texttt{had.28.6}\equiv {\fontsize{4}{5}\selectfont \left( \begin{array}{cccccccccccccccccccccccccccc}
+&+&+&+&+&+&+&+&+&+&+&+&+&+&+&+&+&+&+&+&+&+&+&+&+&+&+&+\\
+&-&-&+&-&+&-&+&+&-&-&+&+&-&+&-&+&+&-&-&+&-&+&-&+&+&-&-\\
+&-&+&-&-&+&+&-&-&-&-&+&-&+&-&+&-&-&-&+&+&+&+&+&+&+&-&-\\
+&-&+&-&-&-&-&+&-&+&+&+&+&-&-&-&+&+&+&+&-&+&-&+&-&+&-&-\\
+&-&-&+&-&-&-&-&+&+&+&-&-&+&+&-&+&-&-&+&+&+&+&+&-&-&+&-\\
+&+&-&+&+&-&-&+&-&-&-&+&-&+&+&+&+&-&-&-&-&+&-&+&-&+&-&+\\
+&+&+&-&-&-&-&-&+&-&-&-&+&+&+&+&-&+&+&-&-&-&+&+&-&+&+&-\\
+&+&-&+&-&-&+&-&-&+&-&+&+&-&-&+&+&-&+&-&-&+&+&-&+&-&+&-\\
+&-&-&-&+&-&+&+&+&+&-&-&-&-&+&+&-&+&+&-&+&+&-&+&+&-&-&-\\
+&-&+&+&+&-&+&-&-&-&+&-&+&-&+&-&-&-&+&-&+&+&+&-&-&+&-&+\\
+&-&-&-&+&+&-&-&-&+&-&-&+&+&-&-&+&-&+&-&+&-&-&+&+&+&+&+\\
+&+&+&+&-&+&-&+&-&+&-&-&-&-&+&-&-&-&+&+&-&-&+&+&+&-&-&+\\
+&+&-&-&-&+&+&+&-&-&+&-&+&+&+&+&+&-&+&+&+&-&-&-&-&-&-&-\\
+&+&-&-&+&+&-&-&+&-&+&+&-&-&+&-&-&-&+&+&-&+&-&-&+&+&+&-\\
+&-&-&+&-&-&+&+&+&-&+&-&+&-&-&+&-&-&-&+&-&-&-&+&+&+&+&+\\
+&+&-&+&-&-&+&-&+&+&-&+&-&+&-&-&-&+&+&+&+&-&-&-&-&+&-&+\\
+&-&-&-&+&-&-&-&-&+&+&+&+&+&+&+&-&+&-&+&-&-&+&-&+&-&-&+\\
+&-&+&-&-&+&+&+&+&+&-&+&+&+&+&-&-&-&-&-&-&+&-&-&-&-&+&+\\
+&+&-&-&+&+&+&-&+&-&-&-&+&-&-&-&+&+&-&+&-&+&+&+&-&-&-&+\\
+&+&-&-&-&+&-&+&-&+&+&-&-&-&-&+&-&+&-&-&+&+&+&-&-&+&+&+\\
+&-&-&+&+&+&+&+&-&-&+&+&-&+&-&-&-&+&+&-&-&-&+&+&-&-&+&-\\
+&+&+&-&-&-&+&-&-&-&+&+&-&-&+&-&+&+&-&-&+&-&-&+&+&-&+&+\\
+&-&+&+&-&+&-&-&+&-&+&-&-&+&-&+&+&+&+&-&-&+&-&-&+&-&-&+\\
+&+&+&-&+&-&+&+&+&+&+&-&-&+&-&-&+&-&-&-&-&-&+&-&+&+&-&-\\
+&-&+&+&+&+&+&-&-&+&-&-&-&-&+&+&+&+&-&+&-&-&-&-&-&+&+&-\\
+&-&+&-&+&-&-&+&+&-&-&+&-&-&-&+&+&-&+&+&+&-&+&-&-&-&+&+\\
+&+&+&+&+&+&-&-&+&+&+&+&+&-&-&+&-&-&-&-&+&-&-&+&-&-&-&-\\
+&+&+&+&+&-&-&+&-&-&-&-&+&+&-&-&-&+&-&+&+&+&-&-&+&-&+&-
  \end{array}
  \right)}\]

The coboundary of the associated loop-cocycle coincides with the coboundary of
\[\partial_3\,\partial_4\,\partial_5\,\partial_6\,\partial_{10}\,\partial_{11}\,\partial_{12}\,\partial_{14}\,\partial_{15}\,\partial_{16}\,\partial_{18}\,\partial_{19}.\]

We have also obtained the same three Hadamard equivalence classes for both right Bol loops in Table \ref{tab:spaces_4}. As an illustrative example, we focus on the right Bol loop \texttt{28.9.3.0}, over which the mentioned classes are loop-cocyclic by means of the following matrices.

\[\texttt{had.28.376}\equiv {\fontsize{4}{5}\selectfont \left( \begin{array}{cccccccccccccccccccccccccccc}
-&-&-&-&-&-&-&-&-&-&-&-&-&-&-&-&-&-&-&-&-&-&-&-&-&-&-&-\\
-&+&+&-&-&+&-&+&+&-&+&+&+&-&-&-&-&+&+&+&-&+&-&-&-&-&+&+\\
-&-&-&+&-&-&-&+&+&+&+&-&-&+&-&-&+&-&+&+&+&+&+&-&-&+&+&-\\
-&+&-&-&-&+&-&-&-&+&+&-&-&-&+&+&-&-&+&+&+&-&-&+&+&+&+&+\\
-&+&-&+&-&-&+&+&+&-&-&+&-&-&+&-&+&+&+&+&+&-&-&+&+&-&-&-\\
-&-&+&+&+&-&+&-&-&-&+&-&-&+&-&-&-&+&-&+&+&+&-&+&+&-&+&+\\
-&-&-&-&+&+&-&+&+&-&-&+&+&+&-&+&-&-&-&+&+&-&+&+&+&-&+&-\\
-&+&+&-&+&-&-&+&+&+&-&-&-&+&+&-&-&+&-&+&-&-&+&-&+&+&-&+\\
-&-&+&+&+&+&+&+&-&+&-&-&+&-&-&+&-&+&+&+&+&-&-&-&-&+&-&-\\
-&+&-&+&+&-&-&-&-&-&+&+&+&+&+&+&+&+&-&+&-&-&-&-&-&+&+&-\\
-&+&+&+&-&+&-&-&-&+&-&-&+&+&-&-&+&+&+&-&-&-&+&+&+&-&+&-\\
-&-&+&-&-&-&+&-&+&+&+&+&+&-&+&-&-&+&-&-&+&-&+&+&-&+&+&-\\
-&-&-&+&-&+&+&-&+&-&+&+&-&+&-&+&-&+&+&-&-&-&+&-&+&+&-&+\\
-&+&-&-&+&+&+&-&+&+&+&-&-&-&-&+&+&+&-&+&-&+&+&+&-&-&-&-\\
-&-&-&+&+&-&-&+&-&+&-&+&-&-&+&+&-&+&+&-&-&+&+&+&-&-&+&+\\
-&+&-&-&-&-&+&+&-&-&-&-&+&-&-&+&+&+&-&-&+&+&+&-&+&+&+&+\\
-&+&+&+&+&+&-&+&-&-&+&+&-&-&-&-&+&-&-&-&+&-&+&+&-&+&-&+\\
-&-&+&-&-&+&-&+&-&+&+&+&-&+&+&+&+&+&-&-&+&+&-&-&+&-&-&-\\
-&+&+&+&+&-&-&-&+&-&+&-&+&-&+&+&-&-&+&-&+&+&+&-&+&-&-&-\\
-&-&+&-&+&+&+&-&-&-&-&+&-&-&+&-&+&-&+&+&-&+&+&-&+&+&+&-\\
-&+&+&-&+&-&+&-&+&+&-&+&-&+&-&+&+&-&+&-&+&-&-&-&-&-&+&+\\
-&-&-&-&+&+&-&-&+&-&-&-&+&+&+&-&+&+&+&-&+&+&-&+&-&+&-&+\\
-&-&+&-&-&-&+&+&-&-&+&-&+&+&+&+&+&-&+&+&-&-&+&+&-&-&-&+\\
-&+&+&+&-&+&+&+&+&-&-&-&-&+&+&+&-&-&-&-&-&+&-&+&-&+&+&-\\
-&+&-&-&+&-&+&+&-&+&+&+&+&+&-&-&-&-&+&-&-&+&-&+&+&+&-&-\\
-&-&+&+&-&-&-&-&+&+&-&+&+&-&-&+&+&-&-&+&-&+&-&+&+&+&-&+\\
-&-&-&+&+&+&+&+&+&+&+&-&+&-&+&-&+&-&-&-&-&-&-&-&+&-&+&+\\
-&+&-&+&-&+&+&-&-&+&-&+&+&+&+&-&-&-&-&+&+&+&+&-&-&-&-&+
  \end{array}
  \right)}\]

  \[\texttt{had.28.481}\equiv {\fontsize{4}{5}\selectfont \left( \begin{array}{cccccccccccccccccccccccccccc}
-&-&-&-&-&-&-&-&-&-&-&-&-&-&-&-&-&-&-&-&-&-&-&-&-&-&-&-\\
-&+&+&+&+&-&+&-&+&+&-&-&+&-&-&+&-&-&-&+&+&-&+&+&-&+&-&-\\
-&+&-&-&-&+&+&-&+&-&+&+&-&+&+&-&-&+&-&-&+&-&+&+&-&+&-&+\\
-&-&+&-&+&-&+&-&-&-&-&-&-&+&+&-&+&+&+&+&+&-&+&+&+&-&+&-\\
-&+&-&-&+&-&+&-&+&+&-&+&+&-&-&-&-&+&+&-&+&+&-&-&+&-&+&+\\
-&-&-&-&+&-&+&+&+&+&+&+&+&+&+&+&+&-&+&-&-&-&+&-&-&-&-&-\\
-&-&+&+&+&+&+&-&-&+&-&-&-&+&+&+&-&+&+&-&-&+&-&-&-&+&-&+\\
-&+&-&+&-&-&-&+&-&+&-&-&+&+&-&-&+&+&+&-&-&-&+&+&+&+&-&+\\
-&+&+&+&+&-&-&-&-&-&+&+&+&+&+&-&-&-&-&+&-&+&+&-&+&-&-&+\\
-&-&+&-&-&+&+&+&+&+&+&-&-&-&-&-&-&-&+&+&-&+&+&+&+&-&-&+\\
-&+&+&-&-&+&-&+&+&+&-&-&+&+&+&-&+&+&-&+&+&+&-&-&-&-&-&-\\
-&-&+&+&+&-&-&+&+&-&+&+&-&-&-&-&+&+&+&+&+&-&-&-&-&+&-&+\\
-&-&-&+&+&-&-&+&+&+&+&-&-&+&+&-&-&-&-&-&+&+&-&+&+&+&+&-\\
-&+&-&+&+&+&+&+&-&-&+&-&-&-&-&+&+&+&-&-&+&+&+&-&+&-&-&-\\
-&+&+&+&-&+&+&-&+&-&+&-&+&-&+&-&+&-&+&-&-&-&-&-&+&+&+&-\\
-&-&-&+&-&+&-&-&-&+&+&+&+&-&+&+&-&+&+&+&+&-&-&+&+&-&-&-\\
-&+&-&-&-&-&-&-&-&+&+&-&-&-&+&+&+&-&+&+&+&+&+&-&-&+&+&+\\
-&-&-&+&-&-&+&-&+&-&+&-&+&+&-&+&+&+&-&+&-&+&-&+&-&-&+&+\\
-&+&-&+&+&+&+&+&-&+&-&+&-&-&+&-&+&-&-&+&-&-&-&+&-&-&+&+\\
-&-&-&+&-&+&+&+&-&-&-&+&+&+&-&-&-&-&+&+&+&+&+&-&-&+&+&-\\
-&-&+&-&+&+&-&-&-&+&+&+&+&-&-&-&+&+&-&-&-&+&+&+&-&+&+&-\\
-&+&+&-&-&-&+&+&-&+&+&+&-&+&-&+&-&+&-&+&-&-&-&-&+&+&+&-\\
-&-&+&+&-&+&-&-&+&+&-&+&-&+&-&+&+&-&-&-&+&-&+&-&+&-&+&+\\
-&+&-&-&+&+&-&-&+&-&-&+&-&+&-&+&+&-&+&+&-&+&-&+&+&+&-&-\\
-&-&-&-&+&+&-&+&+&-&-&-&+&-&+&+&-&+&-&+&-&-&+&-&+&+&+&+\\
-&-&+&-&-&-&+&+&-&-&-&+&+&-&+&+&+&-&-&-&+&+&-&+&+&+&-&+\\
-&+&+&+&-&-&-&+&+&-&-&+&-&-&+&+&-&+&+&-&-&+&+&+&-&-&+&-\\
-&+&+&-&+&+&-&+&-&-&+&-&+&+&-&+&-&-&+&-&+&-&-&+&-&-&+&+
  \end{array}
  \right)}\]

\[\texttt{had.28.484}\equiv {\fontsize{4}{5}\selectfont \left( \begin{array}{cccccccccccccccccccccccccccc}
-&+&-&-&-&-&+&+&-&+&+&-&+&+&+&-&+&-&-&+&+&-&-&+&+&-&-&+\\
-&-&+&-&+&+&+&-&+&+&+&+&-&+&-&-&+&-&+&+&-&-&-&-&+&-&+&-\\
-&+&-&-&+&-&-&-&+&+&+&-&+&+&+&-&-&+&+&-&-&+&+&-&-&-&+&+\\
-&-&+&+&+&-&+&-&-&+&-&-&+&-&+&+&-&+&+&+&-&-&-&-&+&+&-&+\\
-&+&+&+&-&+&-&+&+&-&+&+&+&+&-&-&-&+&-&-&-&-&-&-&+&+&-&+\\
-&-&-&+&+&+&+&+&-&-&-&+&-&+&+&-&+&+&+&-&+&-&+&-&-&-&-&+\\
-&+&-&-&+&+&+&+&-&-&-&+&+&-&+&-&-&-&+&-&-&+&-&+&+&+&+&-\\
-&-&-&+&-&+&-&+&+&+&+&+&+&-&+&+&-&-&+&+&-&-&+&+&-&-&-&-\\
-&-&+&+&+&-&+&-&-&+&+&+&+&+&-&-&-&-&-&-&+&+&+&+&-&+&-&-\\
-&+&-&+&-&+&+&-&-&+&-&+&-&+&-&+&-&+&-&+&-&+&-&+&-&-&+&+\\
-&+&+&+&+&-&+&+&+&-&+&-&-&-&-&+&+&-&+&-&-&+&-&+&-&-&-&+\\
-&-&-&+&-&-&+&-&+&-&+&+&+&-&+&+&+&+&-&-&+&+&-&-&+&-&+&-\\
-&-&+&-&+&+&-&+&+&+&-&-&-&+&+&+&-&+&-&-&+&+&-&+&+&-&-&-\\
-&+&-&+&-&-&+&+&+&+&-&-&-&+&-&+&-&-&+&-&+&-&+&-&+&+&+&-\\
-&+&+&-&-&+&-&-&-&+&-&+&+&-&-&+&+&-&+&-&+&+&+&-&+&-&-&+\\
-&-&+&-&-&+&+&-&+&-&-&-&+&+&+&+&+&-&-&-&-&-&+&+&-&+&+&+\\
-&-&-&+&+&+&-&+&+&+&-&-&+&-&-&-&+&-&-&+&+&+&-&-&-&+&+&+\\
-&+&-&+&+&+&-&-&-&-&+&-&-&+&+&+&+&-&-&+&-&+&+&-&+&+&-&-\\
-&+&+&+&-&+&-&-&-&+&+&-&-&-&+&-&+&+&+&-&+&-&-&+&-&+&+&-\\
-&-&+&-&-&-&-&+&-&-&+&+&-&+&+&+&-&-&+&+&+&+&-&-&-&+&+&+\\
-&-&-&-&+&-&-&+&-&+&+&+&-&-&-&+&+&+&-&-&-&-&+&+&+&+&+&+\\
-&+&-&-&+&-&-&-&+&-&-&+&+&+&-&+&+&+&+&+&+&-&-&+&-&+&-&-\\
-&+&+&-&-&-&+&+&+&+&-&+&-&-&+&-&+&+&-&+&-&+&+&-&-&+&-&-\\
-&-&-&-&-&+&+&-&+&-&+&-&-&-&-&-&-&+&+&+&+&+&+&+&+&+&-&+\\
-&-&+&+&-&-&-&+&-&-&-&-&+&+&-&-&+&+&+&+&-&+&+&+&+&-&+&-\\
-&+&+&+&+&-&-&-&+&-&-&+&-&-&+&-&-&-&-&+&+&-&+&+&+&-&+&+
  \end{array}
  \right)}\]

Their coboundaries coincide, respectively, with the coboundaries of
\[\partial_2\,\partial_4\,\partial_5\,\partial_6\,\partial_7\,\partial_9\,\partial_{13}\,\partial_{15}\,\partial_{17}\,\partial_{18}\,\partial_{23}\,\partial_{26},\]
\[\partial_2\,\partial_3\,\partial_4\,\partial_8\,\partial_9\,\partial_{10}\,\partial_{11}\,\partial_{12}\,\partial_{13}\,\partial_{15}\,\partial_{16}\,\partial_{17}\,\partial_{18}\,\partial_{21}\,\partial_{23}\,\partial_{25}\,\partial_{26}\]
and
\[\partial_2\,\partial_3\,\partial_4\,\partial_{12}\,\partial_{13}\,\partial_{14}\,\partial_{15}\,\partial_{19}\,\partial_{23}\,\partial_{26}.\]

\section{Further work}

This paper has established a formal framework for the cocyclic development of Hadamard matrices over loops by developing a cohomology with associativity obstructions. On the basis of this theoretical foundation, we have formalized the notion of loop-cocyclic Hadamard matrices, demonstrating that pseudococyclic matrices are naturally embedded within this framework. We have also formalized the relationship among loop-cocycles and central extensions of loops, and studied how these extensions can be implemented into the cocyclic development of Hadamard matrices over loops.

Overall, our cocyclic framework opens new possibilities for both theoretical and computational developments in combinatorial design theory. In this last regard, we have proved the existence of four new Hadamard equivalence classes of order 24 and eight others of order 28 that are not cocyclic over any group, but they are loop-cocyclic over Moufang loops or right Bol loops. An immediate theoretical challenge emerges in particular from the distribution of combinatorial designs observed in Table \ref{tab:spaces_3}. The computational results therein shown reveal that only a selective subset of non-associative non-Moufang right Bol loops admit loop-cocyclic Hadamard developments. The systematic collapse of the cocycle space to $\dim(\mathcal{Z}^2) = 11$ among all candidates strongly implies the existence of an underlying rigid algebraic invariant. Future research must determine whether this dimensionality constraint constitutes a sufficient condition for admissibility, or if it reflects a deeper obstruction involving the core or the nucleus of the loop variety.

It remains an open objective to extend this computational framework to other families of non-associative loops and to consistently analyze common algebraic properties of those loops that have successfully generated loop-cocyclic Hadamard matrices. This analysis aims to establish conditions to guide the selection and algebraic construction of new candidate families of loops over which new loop-cocyclic Hadamard equivalence classes can be found.

Concerning the computational study of higher orders, it is necessary to deal with the inconvenience combinatorial explosion inherent to orbit-counting over dense coboundary spaces. In this regard, Theorem \ref{theorem_aut} should be reinterpreted through representation theory. The distribution of stabilizer sizes within the orbit summation is fundamentally non-random. It reflects the explicit structure of the cocyclic vector space viewed as a finite module over the group algebra $\mathbb{F}_2[\text{Aut}(L)]$. Decomposing this module into its primary irreducible constituents would yield a more efficient approach.

In any case, the computational evidence presented herein demonstrates that the classical cocyclic Hadamard conjecture is artificially restricted by the rigidity of the associative locus. The natural generalization of the cocyclic development of Hadamard matrices from groups to loops enables us to propose a more general framework: for every integer $t \ge 1$, there exists a loop-cocyclic Hadamard matrix developed over a loop of order $4t$. This formulation shifts the focus from classical group cohomology to a systematic classification of coboundary operators in non-associative extensions.

Finally, while the formal cohomological framework developed in Section \ref{sec:preliminaries} is valid for arbitrary dimensions $k$, our current computational implementations are constrained to the case $k=2$. We propose as further work to delve into this computational approach for higher-degree spaces.

\section*{Acknowledgement}

The first and third authors have partially been supported by the Research Project {\em HADAMARD-CRYPT} (PID2024-160735NB-I00), co-financed by the EU -- Ministry of Finance and Public Administration -- European Funds -- Ministry of University, Research and Innovation. The second author has partially been supported by the Research Project {\em ``Modeling small-world, scale-free networks from combinatorial designs based on quasigroup digraphs''} (PPIT-FEDER-SOL2024-31611), co-financed by the EU -- Ministry of Finance and Public Administration -- European Funds -- Andalusian Regional Government -- Ministry of University, Research and Innovation.

\end{document}